\pgfplotsset{compat=1.14}
\definecolor{listinggray}{gray}{0.9}
\newcommand{\HANSO}{\textsf{HANSO}}
\newcommand{\ROBOBOA}{\textsf{ROBOBOA}}
\newcommand{\Ntheta}{N_{\theta}} 
\newcommand{\R}{\mathbb{R}} 
\newcommand{\Rp}{\mathbb{R}_+} 
\newcommand{\pb}{\boldsymbol{p}}
\newcommand{\xb}{\boldsymbol{x}}
\newcommand{\yb}{\boldsymbol{y}}
\newcommand{\thetab}{\boldsymbol{\theta}}
\newcommand{\Tlow}{T^{{\rm low}}}
\newcommand{\cD}{\mathcal{D}}
\newcommand{\cU}{\mathcal{U}}
\newcommand{\cB}{\mathcal{B}}
\newcommand{\revise}[1]{#1}
\title{Tuning Multigrid Methods with Robust Optimization and Local
  Fourier Analysis\thanks{Submitted to the editors DATE.
\funding{The work of J.B.\ was partially funded by U.S. Department of Energy, Office of
Science, Office of Advanced Scientific Computing Research under Award
Number DE-SC0016140.  The work of S.M.\ was partially funded by an NSERC Discovery Grant.
The work of M.M.\ and S.M.W.\ was
supported by the U.S.\ Department of Energy,
Office of Science, Office of Advanced Scientific Computing Research, applied
mathematics program under contract no.\ DE-AC02-06CH11357.}}}
\author{Jed Brown\footnotemark[2]
\and Yunhui He\footnotemark[3]\ \footnotemark[4]
  \and Scott MacLachlan\footnotemark[4]
  \and Matt Menickelly\footnotemark[5]
  \and Stefan M.\ Wild\footnotemark[5]}
\begin{document}


\maketitle
\renewcommand{\thefootnote}{\fnsymbol{footnote}}
\footnotetext[2]{Department of Computer Science,
University of Colorado Boulder,
430 UCB, Boulder, CO 80309, USA
  (\email{jed@jedbrown.org}).}
\footnotetext[3]{Department of Applied Mathematics, University of Waterloo, Waterloo, ON N2L 3G1, Canada (\email{yunhui.he@uwaterloo.ca}).}
\footnotetext[4]{Department of Mathematics and Statistics, Memorial University of Newfoundland, St. John's, NL A1C 5S7, Canada
  (\email{yunhui.he@mun.ca}, \email{smaclachlan@mun.ca}).}
\footnotetext[5]{Mathematics and Computer Science Division, Argonne National Laboratory, Lemont, IL 60439,
USA
  (\email{mmenickelly@anl.gov}, \email{wild@anl.gov}).}

\renewcommand{\thefootnote}{\arabic{footnote}}
\begin{abstract}
  Local Fourier analysis is a useful  tool for predicting  and analyzing the performance of many efficient algorithms for the solution of discretized PDEs, such as multigrid and domain decomposition methods. The crucial aspect  of local Fourier analysis is  that it can be used to minimize an estimate of the  spectral radius of a stationary iteration, or the condition number of a preconditioned system, in terms of a symbol representation of the algorithm.
  In practice, this is a ``minimax'' problem, minimizing with respect to solver parameters the appropriate measure of solver work, which involves maximizing over the Fourier frequency.
  Often, several algorithmic parameters may be determined by local Fourier analysis in order to obtain efficient algorithms.
  Analytical solutions to minimax problems are rarely possible beyond simple problems; the status quo in local Fourier analysis involves grid sampling, which is prohibitively expensive
in high dimensions.
  In this paper, we propose and explore optimization algorithms to solve these problems efficiently.
  Several examples, with known and unknown analytical solutions, are presented to show the effectiveness of these approaches.

\end{abstract}

\begin{keywords}
Local Fourier analysis, Minimax problem, Multigrid methods, Robust optimization
\end{keywords}

\begin{AMS}
	47N40, 65M55, 90C26, 49Q10
\end{AMS}

%
%
%
%
%

\section{Introduction}\label{Intr-Optimization}
Multigrid methods \cite{MR1156079,MR558216,stuben1982multigrid,hackbusch2013multi,briggs2000multigrid}  have been successfully developed for the numerical solution of many discretized partial differential equations (PDEs), leading to broadly applicable algorithms that solve problems with $N$ unknowns in $O(N)$ work and storage. Constructing  efficient multigrid methods depends heavily  on the choice of the algorithmic components, such as the coarse-grid correction,
prolongation, restriction, and relaxation schemes.  No general rules for such choices exist, however, and many problem-dependent decisions must be made. Local Fourier analysis (LFA) \cite{MR1807961,wienands2004practical} is a commonly used tool for analyzing multigrid and other multilevel algorithms. LFA was first introduced by Brandt \cite{brandt1977multi}, where the LFA smoothing factor was presented as a good predictor for multigrid performance. The principal
advantage of LFA is that it provides relatively sharp quantitative estimates of the asymptotic
multigrid convergence factor for some classes of relaxation schemes
and multigrid algorithms applied to linear PDEs possessing appropriate \revise{(generalizations of Toeplitz)} structure.

LFA has been applied to many PDEs, deriving optimal
algorithmic parameters in a variety of settings.  LFA for
pointwise relaxation of scalar PDEs is covered in standard textbooks
\cite{MR1807961, briggs2000multigrid} and substantial work has been done
to develop and apply LFA to coupled systems.  For the Stokes
equations, LFA was first presented for distributive
relaxation applied to the staggered marker-and-cell (MAC)
finite-difference discretization scheme
in \cite{niestegge1990analysis} and later for multiplicative
Vanka relaxation for both the MAC scheme and the Taylor-Hood
finite-element discretization \cite{sivaloganathan1991use,
  SPMacLachlan_CWOosterlee_2011a}.  Recently, LFA has been used to
analytically optimize relaxation parameters for additive variants of
standard block-structured relaxations for both finite-difference
and finite-element discretizations of the Stokes equations
\cite{NLA2147, HMFEMStokes}.  These works, in particular, show the
importance of choosing proper relaxation parameters to improve
multigrid performance.  Similar work has been done for several other
common coupled systems, such as poroelasticity
\cite{franco2018multigrid, luo2017uzawa, luo2018monolithic} or
Stokes-Darcy flow \cite{Luo_etal_2017a}.  With insights gained from
this and similar work, LFA has also been applied in much broader
settings, such as the optimization of algorithmic parameters in
balancing domain decomposition by constraints
preconditioners \cite{JedBDDC}.  While the use of a Fourier
ansatz inherently limits LFA to a class of homogeneous (or periodic \cite{bolten2018fourier,kumar2019local}) discretized
operators (block-structured with (multilevel) Toeplitz blocks), recent work has shown
that LFA can be applied to increasingly complex
and challenging classes of both problems and solution algorithms,
limited only by one's ability to optimize parameters within the LFA
symbols.

This problem can often be cast as the minimization of the spectral
radius or norm of the Fourier representation of an error-propagation operator
with respect to its algorithmic parameters.
Because the computation of this spectral radius involves a
maximization over the Fourier frequency,
this problem can be formulated as one of minimax optimization.
For  problems with simple structure or few parameters, the analytical
solution of this minimax problem can be determined. In many cases, however,
the solution of this minimax optimization problem is difficult
because of its nonconvexity and nonsmoothness.

In previous work, optimization approaches have been used to
tune performance of multigrid methods.
In \cite{CWOosterlee_RWienands_2002a}, a genetic
algorithm is used to select the multigrid components for discretized PDEs from a discrete set such that an approximate three-grid Fourier convergence factor is minimized.
Relatedly, in \cite{schmitt2019optimizing} a two-step optimization procedure is presented;
in the first step a genetic algorithm is applied to a biobjective optimization formulation
(one objective is the approximate LFA convergence factor) to choose multigrid components from a discrete set, and in the second step a single-objective evolutionary algorithm (CMA-ES) is used to tune values of relaxation parameters from the approximate Pareto points computed in the first step.
However, neither of these approaches directly solves the minimax problem derived from LFA and hence neither achieve the same goal proposed here.
 Grebhahn et al. \cite{grebhahn2014optimizing} apply the domain-independent tool \texttt{SPL Conqueror} in a series of experiments to predict performance-optimal configurations of two geometric multigrid codes, but this tool is based largely on heuristics of estimating performance gains and does not attempt to optimize directly any particular objective.
 Recently, machine learning approaches have been used in learning multigrid parameters.
 Neural network training has been proposed to learn prolongation matrices for multigrid methods for 2D diffusion \revise{and related unstructured} problems to obtain fast numerical solvers \cite{Greenfeld2019Optimization,Luz2020LearningAMG}.
 By defining a stochastic convergence functional approximating the spectral radius of an iteration matrix,
 Katrutsa et al. \cite{Katrutsa2020} demonstrate good performance of stochastic gradient methods applied to the functional to learn parameters defining
 restriction and prolongation matrices.

Optimizing even a few parameters in a multigrid method using LFA can be a difficult task,  which can be done analytically for only a very small set of PDEs, discretizations, and solution algorithms; see, for example, \cite{NLA2147,HMFEMStokes}. In practical use,  we often sample the parameters on only a finite set to approximately optimize the LFA predictions. The accuracy of these predictions can be unsatisfactory since it may be computationally prohibitive to sample the parameter space finely enough to make good predictions. Thus, there is a need to design better optimization tools for LFA.  \revise{In this paper, we investigate the use of modern nonsmooth and derivative-free optimization algorithms to optimize algorithmic parameters within the context of LFA.  While tools from machine learning could also be applied to achieve this goal (either directly to the same LFA-based objective functions as are used here or to other proxies for the multigrid convergence factor, as in \cite{Greenfeld2019Optimization,Luz2020LearningAMG,Katrutsa2020}), the approaches proposed here are expected to more efficiently yield good parameter choices, under the restrictions of applicability that are inherent to LFA.}

After introducing the minimax problem for LFA in \Cref{LFA-background}, in \Cref{Optimization-Discussion} we discuss optimization algorithms for the LFA minimax problem.
We employ an approach based on outer approximations for minimax optimization problems \cite{menickelly2017derivative} to solve the minimax problem considered here.
This method dynamically samples the Fourier frequency space, which defines a sequence of relatively more tractable minimax optimization problems.
In our numerical experiments in \Cref{Numer}, we investigate the use of derivative-free and derivative-based variants of this method
 for several LFA optimization problems to validate our approach.
We conclude in \Cref{Conclusion} with additional remarks.


\section{Local Fourier Analysis}\label{LFA-background}
LFA \cite{wienands2004practical,MR1807961} is a tool for predicting the actual performance of multigrid and other multilevel algorithms. The fundamental idea behind LFA is to transform a given discretized PDE operator and the error-propagation operator of its solution algorithm  into  small-sized matrix representations known as Fourier symbols. To design efficient algorithms using LFA, we analyze and optimize the properties of the Fourier representation instead of directly optimizing properties of the discrete systems themselves.  This reduction in problem size is necessary in order to analytically optimize simple methods and to numerically optimize methods with many parameters.
Such use of LFA leads to a minimax problem, minimizing an appropriate measure of work in the solver over its parameters, measured by maximizing the predicted convergence factor over all Fourier frequencies. While the focus of this paper is on solving these minimax problems, we first introduce some basic definitions of LFA; for more details, see \cite{wienands2004practical,MR1807961}.

\subsection{Definitions and Notation}
We assume that the PDEs targeted here are posed on domains in $d$ dimensions. A standard approach to LFA is to consider infinite-mesh problems, thereby avoiding treatment of boundary conditions.  Thus, we consider $d$-dimensional uniform infinite  grids
\begin{equation*}
  \mathbf{G}_{h}=\big\{\boldsymbol{x}:=(x_1,x_2,\ldots,x_d)=\boldsymbol{k}h=(k_{1},k_{2},\ldots,k_d)h, \; k_i\in \mathbb{Z}\big\},
\end{equation*}
where we use the subscript $h$
to indicate an operator discretized with a meshsize $h$.

Let $L_h$ be a scalar Toeplitz or multilevel Toeplitz operator, representing a discretization of a scalar PDE, defined by its stencil acting on $l^\infty(\mathbf{G}_{h})$ as follows,
\begin{eqnarray*}\label{defi-symbol-P2P1}
  L_{h}  &:=& [s_{\boldsymbol{\kappa}}]_{h} \,\,(\boldsymbol{\kappa}=(\kappa_{1},\kappa_{2},\cdots,\kappa_{d})\in \boldsymbol{V}); \, \quad
  L_{h}w_{h}(\boldsymbol{x})=\sum_{\boldsymbol{\kappa}\in\boldsymbol{V}}s_{\boldsymbol{\kappa}}w_{h}(\boldsymbol{x}+\boldsymbol{\kappa}h),
  \end{eqnarray*}
with constant coefficients $s_{\boldsymbol{\kappa}}\in \R$ (or $\mathbb{C}$), $\boldsymbol{V}\subset \mathbb{Z}^d$ a finite index set, and $w_{h}(\boldsymbol{x})$
a bounded function on $\mathbf{G}_{h}$. Because $L_h$ is formally diagonalized by the Fourier modes $\varphi(\thetab,\boldsymbol{x})= e^{\iota\thetab\cdot\boldsymbol{x}/h}$, where $\thetab=(\theta_1,\theta_2,\ldots,\theta_d)$ and $\iota^2=-1$, we use $\varphi(\thetab,\boldsymbol{x})$ as a Fourier basis with $\thetab\in \big[-\frac{\pi}{2},\frac{3\pi}{2}\big)^{d}$ (or any  product of intervals with length $2\pi$).

\begin{definition}\label{formulation-symbol}
  We call $\widetilde{L}_{h}(\thetab)=\sum_{\boldsymbol{\kappa}\in\boldsymbol{V}}s_{\boldsymbol{\kappa}}e^{\iota \thetab\cdot\boldsymbol{\kappa}}$ the symbol of $L_{h}$.
\end{definition}
 Note that for all  functions $\varphi(\thetab,\boldsymbol{x})$, $L_{h}\varphi(\thetab,\boldsymbol{x})= \widetilde{L}_{h} (\thetab)\varphi(\thetab,\boldsymbol{x}).$

Here, we focus on the design and use of  multigrid methods for solving  such discretized PDEs. The choice of multigrid components is critical to attaining rapid convergence. In general, multigrid methods make use of complementary relaxation and coarse-grid correction processes to build effective iterative solvers. While  algebraic multigrid \cite{ruge1987algebraic} is effective in some settings where geometric multigrid is not, our focus is on the design and optimization of relaxation schemes for problems posed on regular meshes, to complement a geometric coarse-grid correction process.  The error-propagation operator for a relaxation scheme, represented similarly by a (multilevel) Toeplitz operator,  $M_h$, applied to $L_h$ is generally written as
\begin{equation*}
\mathcal{S}_h(\pb)=I-M_h^{-1}{L}_h,
\end{equation*}
where $\pb\in \R^n$
\revise{can represent classical relaxation weights or other parameters within the relaxation scheme represented by $M_h$.}
For geometric multigrid to be effective, the relaxation scheme $\mathcal{S}_h$ should reduce high-frequency error components quickly but can be slow to reduce low-frequency errors.
 Here, we consider standard geometric grid coarsening; that is, we construct a sequence of coarse grids by doubling the mesh size in each direction. The coarse grid, $\mathbf{G}_{H}$, is defined similarly to $\mathbf{G}_{h}$.  Low  and high frequencies for standard coarsening (as considered here) are given by
\begin{equation*}\label{Low-Freq}
 T^{{\rm low}} :=\left[-\frac{\pi}{2},\frac{\pi}{2}\right)^{d} \quad \mbox{and }  \quad  T^{{\rm high}} :=\left[-\frac{\pi}{2},\frac{3\pi}{2}\right)^{d}\Big\backslash T^{\rm low}.
\end{equation*}

It is thus natural to define the following LFA smoothing factor.

\begin{definition}\label{def-S-mu}
 The error-propagation symbol, $\widetilde{\mathcal{S}}_{h}(\pb,\thetab)$, for relaxation scheme $\mathcal{S}_{h}(\pb)$ on the infinite grid  $\mathbf{G}_{h}$ satisfies
\begin{equation*}
  \mathcal{S}_{h}(\pb)\varphi(\thetab,\boldsymbol{ x})=\widetilde{\mathcal{S}}_{h}(\pb,\thetab)\varphi(\thetab,\boldsymbol{ x}), \,\; \thetab\in \Big[-\frac{\pi}{2},\frac{3\pi}{2}\Big)^d,
\end{equation*}
for all $\varphi(\thetab,\boldsymbol{ x})$, and the corresponding smoothing factor for $\mathcal{S}_{h}$ is given by
\begin{equation}\label{LFA-mu}
  \mu=\max_{\thetab\in T^{{
  \rm high}}}\Big\{\big|\widetilde{\mathcal{S}}_{h}(\pb,\thetab)\big|\Big\}.
\end{equation}
\end{definition}

If the smoothing factor $\mu$ is small, then the dominant error after relaxation is associated with the low-frequency modes on $\mathbf{G}_h$. These modes can be well approximated by their analogues on the coarse grid $\mathbf{G}_H$, which is inherently cheaper to deal with in comparison to $\mathbf{G}_h$.  This leads to a two-grid method---see \cite{MR1156079,MR558216}---where relaxation on $\mathbf{G}_h$ is complemented with a direct solve for \revise{an approximation of the error on $\mathbf{G}_H$}.  \revise{To fix notation, we introduce a standard two-grid algorithm in \Cref{TG-Alg-process}.  Here, the relaxation process is specified by $M_h$, as well as the number of pre- and post-relaxation steps, $\nu_1$ and $\nu_2$.  The coarse-grid correction process is specified by the coarse-grid operator, $L_H$, as well as the interpolation and restriction operators, $P_h$ and $R_h$, respectively.  The error-propagation operator for this algorithm is given by}
\begin{equation}\label{Two-grid-matrix}
  E = \big(I-M_h^{-1}L_h\big)^{\nu_2}\big(I - P_hL_H^{-1}R_hL_h\big)\big(I-M_h^{-1}L_h\big)^{\nu_1}.
\end{equation}
Solving the coarse-grid problem recursively by the two-grid method yields a multigrid method, and  \cref{Two-grid-matrix} can be extended to the corresponding multigrid error-propagation operator.  Varieties of multigrid methods have been developed and used, including W-, V-, and F-cycles; here, we focus on two-level methods.
 \begin{algorithm}\caption{Two-grid method: $u_h^{j+1}={\bf TG}(L_h,M_h,L_H, P_h, R_h, b_h,u_h^{j},\nu_1,\nu_2)$}\label{TG-Alg-process}
 \begin{enumerate}
 \item Pre-relaxation:  Apply $\nu_1$ sweeps of relaxation to $u_h^{j}$:
 \begin{equation*}\label{Pre-SM}
\tilde{u}_h^{j}= {\bf Relaxation}^{\nu_1}(L_h,M_h,b_h,u_h^{j}).
\end{equation*}
 \item Coarse grid correction (CGC):
 \begin{itemize}
 \item Compute the residual: $r_h = b_h-L_h\tilde{u}^{j}_h$;
 \item Restrict the residual: $r_H = R_h r_h$;
 \item Solve the coarse-grid problem: $L_H u_H =r_H$;
 \item Interpolate the correction: $\delta u_h = P_h u_H$;
 \item Update the corrected approximation:  $\hat{u}_h^{j} = \tilde{u}_h^{j} + \delta u_h$;
 \end{itemize}
 \item Post-relaxation: Apply $\nu_2$ sweeps of relaxation to $\hat{u}_h^{j},$
 \begin{equation*}\label{Post-SM}
 u_h^{j+1}= {\bf Relaxation}^{\nu_2}(L_h,M_h,b_h,\hat{u}_h^{j}).
 \end{equation*}
 \end{enumerate}
 \end{algorithm}

For many real-world problems, the operator $L_h$ can be much more complicated than a scalar PDE. To extend LFA to coupled systems of PDEs or higher-order discretizations, one often needs to consider  $q\times q$ linear systems of operators,
\begin{equation*}
  L_h=\begin{pmatrix}
       L_h^{1,1}&   \cdots & L_h^{1,q}  \\
      \vdots&   \cdots & \vdots \\
       L_h^{q,1}&   \cdots & L_h^{q,q} \\
    \end{pmatrix}.
\end{equation*}
Here, $L_h^{i,j}$
denotes a  scalar (multilevel) Toeplitz operator describing how component $j$ in the solution appears in the equations of component $i$, where the number of components, $q$, is determined by the PDE itself and its discretization; see, for example, \cite{StokePatchFHM,NLA2147,HMFEMStokes,HM2018LFALaplace}.
Each entry in the symbol $\widetilde{L}_{h}$ is computed, following  \cref{formulation-symbol}, as the (scalar) symbol of the corresponding block of $L^{i,j}_{h}$. For a given relaxation scheme, the symbol of the block operator $M_h$ can be found in the same way, leading to the extension of the smoothing factor from \cref{def-S-mu}, where the spectral radius of a matrix, $\rho\big(\widetilde{S}_h(\pb,\thetab)\big)$, appears in \cref{LFA-mu} in place of the scalar absolute value. \revise{As in the scalar case, $\rho\big(\widetilde{S}_h(\pb,\thetab)\big)$ represents the worst-case convergence estimate for asymptotic reduction of error in the space of modes at frequency $\thetab$ per step of relaxation.}

The spectral radius or norm  of  $E$ provides measures of the efficiency of the two-grid method. In practice, however, directly analyzing $E$ is difficult because of the large size of the discretized system and the (many) algorithmic parameters involved in relaxation. As a proxy for directly optimizing $\rho(E)$, LFA is often used to choose relaxation parameters to minimize either the smoothing factor $\mu$ or the  ``two-grid LFA convergence factor''
\begin{equation}\label{inner-function}
\Psi_{\Tlow}(\pb) := \displaystyle\max_{\thetab\in\Tlow}\Big\{\rho\big(\widetilde{ E}(\pb,\thetab)\big)\Big\},
\end{equation}
where $\widetilde{E}$ is the symbol of $E$ computed by using extensions of the above approach.  \revise{The parameter set, $\pb$, may now include relaxation parameters, as above, and/or parameters related to the coarse-grid correction process, such as under/over-relaxation weights or entries in the stencils for $P_h$, $R_h$, or $L_H$.}
To estimate the convergence factor of the two-grid operator $E$ in \cref{Two-grid-matrix} using LFA, one thus needs to compute the symbol $\widetilde E$ by examining how the operators $L_h, P_h, L_H, \mathcal{S}_h,$ and so on act on the Fourier components $\varphi(\thetab,\boldsymbol{x})$.

 Note that for any $\boldsymbol{\theta'} \in \left[-\frac{\pi}{2},\frac{\pi}{2}\right)^d$,
\begin{equation}\label{distiguish-LH}
 \varphi(\thetab,\boldsymbol{x}) =\varphi(\boldsymbol{\theta'},\boldsymbol {x})\,\, \text{for}\, \boldsymbol{x} \in \mathbf{G}_H,
\end{equation}
if and only if $\thetab=\boldsymbol{\theta'}({\rm mod}\,\, \pi)$. This means that only those frequency components $\varphi(\thetab,\cdot)$  with $\boldsymbol {\theta}\in \left[-\frac{\pi}{2},\frac{\pi}{2}\right)^d$ are distinguishable on $\mathbf{G}_{H}$.
From \cref{distiguish-LH}, we know that there are $(2^d-1)$ harmonic frequencies, $\thetab$,  of $\boldsymbol{\theta'}$ such that $\varphi(\thetab,\boldsymbol{x})$ coincides with $\varphi(\boldsymbol{\theta'},\boldsymbol {x})$ on $\mathbf{G}_{H}$.
Let
\begin{eqnarray*}
I_{\boldsymbol{\alpha}}:&=&\big\{\boldsymbol{\alpha}=(\alpha_1,\alpha_2,\cdots,\alpha_d) : \;  \alpha_j\in\{0,1\}, \, j=1,\ldots,d\big\},\\
\thetab^{\boldsymbol{\alpha}}&=&(\theta_1^{\alpha_1},\theta_2^{\alpha_2},\cdots,\theta_d^{\alpha_d})=\thetab+\pi\cdot\boldsymbol{\alpha},\qquad
\thetab\in T^{{\rm low}}.
\end{eqnarray*}
Given $\thetab\in T^{\rm low}$, we define a $2^d$-dimensional space
$
  \mathcal{F}(\thetab)={\rm span}\left\{ \varphi(\boldsymbol{\theta^{\alpha}},\cdot): \, \boldsymbol{\alpha}\in I_{\boldsymbol{\alpha}}\right\}.
$
Under reasonable assumptions, the space $\mathcal{F}(\thetab)$ is invariant under the two-grid operator $E$ \cite{MR1807961}.

Inserting the representations of $\mathcal{S}_h, L_h, L_{H}, P_h$, and  $R_h$ into \cref{Two-grid-matrix}, we obtain the Fourier representation of the two-grid error-propagation operator \cite[{\rm Section} 4]{MR1807961} as
\begin{equation}\label{Two-grid-symbol}
 \widetilde{E}(\pb,\thetab)= \widetilde{\boldsymbol {S}}^{\nu_2}_h(\pb,\thetab)\Big[I-\widetilde{\boldsymbol {P}}_h(\thetab)\big(\widetilde{L}_{H}(\pb,2\thetab)\big)^{-1}\widetilde{\boldsymbol{ R}}_h(\thetab)\widetilde{\boldsymbol{ \mathcal{L}}}_{h}(\thetab)\Big]\widetilde{\boldsymbol{ S}}^{\nu_1}_h(\pb,\thetab),
\end{equation}
where
\begin{eqnarray*}
\widetilde{\boldsymbol{\mathcal{L}}}_h(\thetab)&=&\text{diag}\left\{\widetilde{L}_h(\thetab^{\boldsymbol{\alpha}_1}),\cdots, \widetilde{L}_h(\thetab^{\boldsymbol{\alpha}_j}),\cdots,\widetilde{L}_h(\thetab^{\boldsymbol{\alpha}_{2^d}})\right\},\\
\widetilde{\boldsymbol{S}}_h(\pb, \thetab)&=&\text{diag}\left\{\widetilde{\mathcal{S}}_h(\pb, \thetab^{\boldsymbol{\alpha}_1}),
\cdots,\widetilde{\mathcal{S}}_h(\pb, \thetab^{\boldsymbol{\alpha}_j}),\cdots,
\widetilde{\mathcal{S}}_h(\pb,\thetab^{\boldsymbol{\alpha}_{2^d}})\right\},\\
\widetilde{\boldsymbol{R}}_h(\thetab)&=&\left(\widetilde{R}_h(\thetab^{\boldsymbol{\alpha}_1}),\hspace{2mm}\cdots,
\widetilde{R}_h(\thetab^{\boldsymbol{\alpha}_j}),\hspace{2mm}\cdots,
\widetilde{R}_h(\thetab^{\boldsymbol{\alpha}_{2^d}}) \right),\\
\widetilde{\boldsymbol{P}}_h(\thetab)&=&\left(\widetilde{P}_h(\thetab^{\boldsymbol{\alpha}_1}); \hspace{2mm} \cdots;
\widetilde{P}_h(\thetab^{\boldsymbol{\alpha}_j});\hspace{2mm} \cdots;\widetilde{P}_h(\thetab^{\boldsymbol{\alpha}_{2^d}}) \right),
\end{eqnarray*}
in which ${\rm diag}\{T_1,\cdots,T_{2^d}\}$ stands for the block diagonal matrix with diagonal blocks $T_1$ through $T_{2^d}$, block-row and block-column matrices are represented by $(T_1, \cdots, T_{2^d})$ and $(T_1; \cdots; T_{2^d})$, respectively, and $\{\boldsymbol{\alpha}_1,\cdots,
\boldsymbol{\alpha}_{2^d}\}$ is
the set  $I_{\boldsymbol{\alpha}}$.  \revise{We note that $\widetilde{L}_{H}$ is sampled at frequency $2\thetab$ in \cref{Two-grid-symbol} because we use factor-two coarsening.  While $P_h$ and $R_h$ may also depend on $\pb$, we suppress this notation and only consider $L_H$ and $\mathcal{S}_h$ to be dependent on $\pb$, as is used in the examples below.}

In many cases, the convergence factor of the two-grid method in \cref{Two-grid-matrix} can be estimated directly from
the LFA smoothing factor in \cref{def-S-mu}.  In particular, if we have an ``ideal''
coarse-grid correction operator that annihilates low-frequency error components and
leaves high-frequency components unchanged, then the resulting LFA smoothing
factor usually gives a good prediction for the actual multigrid performance.  In this case, we need to optimize only the smoothing factor, which is simpler than optimizing the two-grid LFA convergence factor. For some discretizations, however, $\mu$ fails to accurately predict the actual two-grid performance; see, for example, \cite{HM2018LFALaplace,HMFEMStokes,SPMacLachlan_CWOosterlee_2011a}.
Thus, we focus on the two-grid LFA convergence factor, which accounts for coupling between different harmonic frequencies and often gives a sharp prediction of the actual two-grid performance. In  \Cref{Numer}, we present several examples of optimizing two-grid LFA convergence factors.

\begin{remark}
While the periodicity of Fourier representations naturally leads to minimax problems over half-open intervals such as $T^{\rm low}$, computational optimization is more naturally handled over closed sets.  Thus, in what follows, we optimize the two-grid convergence factor over the closure of the given interval.
\end{remark}

\subsection{Minimax Problem in LFA}
The main goal in our use of LFA is to find an approximate solution
of the minimax problem
\begin{equation}\label{opt-problem}
\min_{\pb\in \cD} \Psi_{\Tlow}(\pb) = \min_{\pb\in \cD}\max_{\thetab\in T^{{\rm low}}}\big\{\rho\big(\widetilde{ E}(\pb,\thetab)\big)\big\},
\end{equation}
where $\widetilde{E}$ is  as discussed above and $\pb$ are algorithmic parameters in $\cD$,  a compact set of allowable parameters. In general, we think of $\cD$ as being implicitly defined as
\begin{equation*}
  \cD=\left\{\pb: \rho\big(\widetilde{ E}(\pb,\thetab)\big)\leq 1 \; \; \forall \thetab\in T^{\rm low}\right\};
\end{equation*}
in practice, we typically use an explicit definition of $\cD$ that is expected to contain this minimal set (and, most important, the optimal $\pb$).
Generalizations of \cref{opt-problem} are also possible, such as to minimize the condition number of a preconditioned system corresponding to other solution approaches.
For example, the authors of \cite{JedBDDC} use LFA to study the condition numbers of the preconditioned system in one type of domain decomposition method, where a different minimax problem arises.
Similarly, in \cite{de2019optimizing}, the optimization of $\|\widetilde{E}(\pb,\thetab)\|$ is considered in the highly non-normal case.

The minimax problem  in \cref{opt-problem} is often difficult to solve.  First, the minimization over $\pb$ in \cref{opt-problem} is generally a nonconvex optimization problem. Second, the minimization over $\pb$ in \cref{opt-problem} is generally a \emph{nonsmooth} optimization problem.
There are two sources of nonsmoothness in \cref{opt-problem}; most apparently, the maximum over $\thetab\in T^{\rm{low}}$ creates a nonsmooth minimization problem over $\pb$, which we will discuss further in the next section. Perhaps more subtle, for fixed $(\pb,\thetab)$, the eigenvalue that realizes the maximum in $\rho(\cdot)$ need not be unique;
in this case, a gradient may not exist, and so $\rho(\widetilde{E}(\pb,\thetab))$ is generally not a differentiable function of $\pb$.
Third, the matrix $\widetilde{E}$ is often not Hermitian,
and there is no specific structure that can be taken advantage of. Thus, while analytical solution of the minimax problem is sometimes possible, one commonly only approximates the solution. Often, this approximation is accomplished simply by a brute-force search over discrete sets of Fourier frequencies and algorithmic parameters, as described next.

\subsection{Brute-Force Discretized Search}\label{subsec:BFS}

Since $T^{\rm low}$ is of infinite cardinality, \cref{opt-problem} is a semi-infinite optimization problem.
\revise{The} current practice is to consider a discrete form of \cref{opt-problem}, resulting from  brute-force sampling over only a finite set of points
$(\pb,\thetab)$.
Assuming $\cD$ is an $n$-dimensional rectangle, this can mean sampling $N_p$ points  in each dimension of the parameter $\pb$ on the intervals $a_k\leq p_k\leq b_k$ and $\Ntheta$
points in each dimension of the frequency $\thetab$.

A simple brute-force discretized search algorithm is given in \Cref{alg:brute-force}.
For convenience, we sample points evenly in each interval, but other choices can also be used.  Since there are $N_p^n$ sampling points in parameter space and $\Ntheta^d$ sampling points in frequency space, $N_p^n \cdot \Ntheta^d$ spectral radii
%
need to be computed within the algorithm.  With increasing values of $q$, $n$, and $d$,  this requires substantial computational work that is infeasible for large values of $d$ and $n$.\footnote{For example, assume we take $N_p=20$ and $\Ntheta=33$ sample points in parameters and frequency, respectively.  Then, the total number of evaluations of $\rho\big(\widetilde{E}(\pb,\thetab)\big)$ is $W(n,d) = 20^{n}\cdot 33^{d}$.
For 2D and 3D PDEs with 3 and 5 algorithmic parameters, this would yield $W(3,2) \approx 9\cdot 10^{6}$,
$W(5,2) \approx 3 \cdot 10^{9}$,
$W(3,3) \approx 3 \cdot 10^{8}$, and
$W(5,3) \approx 10^{11}$.}
Thus, there is a clear need for efficient optimization algorithms for LFA. The aim of this paper is to solve \cref{opt-problem} efficiently without loss of accuracy in the solution.

\section{Robust Optimization Methods}\label{Optimization-Discussion}


The minimax problem \cref{opt-problem}
involves minimization of a function $\Psi_{\Tlow}$ that is generally nonsmooth and defined by the ``inner problem'' of \cref{inner-function}.
%
%
When $\Tlow$ has infinite cardinality (e.g., a connected range of frequencies), solving the maximization
problem in \cref{inner-function} is generally intractable; it results in a global optimization problem
provided $\rho\big(\widetilde{ E}(\pb,\thetab)\big)$ is nonconvex in $\thetab$.
One attempt to mitigate this difficulty, which we test in our numerical results, is to relax the inner problem
\cref{inner-function} by replacing $\Tlow$ with
a finite discretization.

\subsection{Fixed Inner Discretization}\label{HANSO}
Given $\Ntheta^d$ frequencies
$$\thetab(\Ntheta^d):=\{\thetab_j: j=1,\dots,\Ntheta^d\}\subset \Tlow,$$
we can define a relaxed version of the inner problem \cref{inner-function},
\begin{equation}\label{inner-function-2}
\Psi_{\thetab(\Ntheta^d)}(\pb) := \displaystyle\max_{\thetab_j\in\thetab(\Ntheta^d)}\Big\{\rho\big(\widetilde{ E}(\pb,\thetab_j)\big)\Big\}
\;
\leq
\;
\Psi_{\Tlow}(\pb).
\end{equation}
A single evaluation of $\Psi_{\thetab(\Ntheta^d)}(\pb)$ can be performed in finite time by performing $\Ntheta^d$ evaluations of
$\rho\big(\widetilde{ E}(\pb,\cdot)\big)$ and recording the maximum value. This is the ``inner loop'' performed over $\Ntheta^d$ points in lines 9--19 of  \Cref{alg:brute-force}.

Provided there exist gradients with respect to $\pb$ for a fixed value of $\thetab$,
\begin{equation}\label{gradients}
\nabla_{\pb} \rho\big(\widetilde{E}(\pb,\thetab)\big),
\end{equation}
 we automatically obtain a
 subgradient of
 $\Psi_{\thetab(\Ntheta^d)}(\pb)$
 by choosing an arbitrary index
 $$j^*\in\operatorname*{argmax}_{j=1,\dots,\Ntheta^d} \Big\{\rho\big(\widetilde{E}(\pb,\thetab_j)\big)\Big\}$$
 and returning $\nabla_{\pb} \rho\big(\widetilde{E}(\pb,\thetab_{j^*})\big)$.
It is thus straightforward to apply nonsmooth optimization algorithms
 that query a (sub)gradient at each point to the minimization of the relaxation $\Psi_{\thetab(\Ntheta^d)}(\pb)$ in \cref{inner-function-2}.
One such algorithm employing derivative information is \HANSO\ \cite{burke2005robust}; in preliminary experiments, we found \HANSO\ to be suitable when applied to the minimization of $\Psi_{\thetab(\Ntheta^d)}(\pb)$.

This sampling-based approach of applying methods for nonconvex nonsmooth optimization still requires the specification of
$\thetab(\Ntheta^d)$, which may be difficult to do a priori; it may amount to brute-force sampling
of the frequency space.

\subsection{Direct Minimax Solution}\label{ROBOBOA}
We additionally consider inexact methods of outer approximation \cite{Polak1997},
 in particular those for minimax optimization
 \cite{menickelly2017derivative},
 to directly solve \cref{opt-problem}.
 The defining feature of such methods is that, rather than minimizing a fixed relaxation $\Psi_{\thetab(\Ntheta^d)}(\pb)$
in the $k$th outer iteration, they solve
 \begin{equation}\label{opt-problem-3}
\displaystyle\min_{\pb\in\cD} \left\{ \Psi_{\cU_k}(\pb):= \displaystyle\max_{\thetab_j\in\cU_k}\big\{\rho\big(\widetilde{E}(\pb,\thetab_j)\big)\big\} \right\},
 \end{equation}
wherein the relaxation is based on a set $\cU_k\subset \Tlow$ that is defined iteratively.
 In the $k$th iteration of the procedure used here, a nonsmooth optimization method known as manifold sampling \cite{LMW16, KLW18}
 is applied to obtain an approximate solution $\pb_k$ to \cref{opt-problem-3}.
 A new frequency $\thetab_k$ is then computed as an approximate maximizer of
 \begin{equation}\label{max-problem}
 \displaystyle\max_{\thetab\in\Tlow} \Big\{\rho\big(\widetilde{E}(\pb_k,\thetab)\big)\Big\},
 \end{equation}
and the set  $\cU_k$ is augmented so that $\thetab_k\in \cU_{k+1}$.
This two-phase optimization algorithm, which alternately finds approximate solutions to
\cref{opt-problem-3} and \cref{max-problem},
is guaranteed to find Clarke-stationary points
(i.e., points where 0 is a subgradient of $\Psi_{\Tlow}(\pb)$ when $\pb$ is on the interior of $\cD$)
of \cref{opt-problem}
as $k\to\infty$, given basic assumptions \cite{menickelly2017derivative}.

\begin{remark}
The work of \cite{menickelly2017derivative}
 (and, in particular, the manifold sampling algorithm employed to approximately solve \cref{opt-problem-3})
was originally intended for problems in which the (sub)gradients
$\partial_{\pb} \rho\left(\widetilde{E}(\pb,\thetab)\right)$ and
$\partial_{\thetab} \rho\left(\widetilde{E}(\pb,\thetab)\right)$
are assumed to be unavailable.
 However, the method in \cite{menickelly2017derivative}
can exploit such derivative information when it is available. Using the concept of  model-based methods for derivative-free optimization
(see, e.g., \cite[Chapter 10]{Conn2009a}, \cite[Section 2.2]{LMW2019AN}), manifold sampling depends on the construction at $\pb_k$ of local
models of functions $\rho\big(\widetilde{E}(\cdot,\thetab_j)\big)$ for a subset of $\thetab_j\in\cU_k$,
each of which is as accurate as a first-order Taylor model of $\rho\big(\widetilde{E}(\cdot,\thetab_j)\big)$.
\revise{When gradient information is available, a gradient can be employed to construct the exact first-order Taylor model of
$\rho\big(\widetilde{E}(\cdot,\thetab_j)\big)$ around $\pb_k$.}
In our numerical results, we test this method both with and without derivative information.
\end{remark}

\subsection{Computation of Derivatives}
In our setting,  computing (sub)gradients as in \cref{gradients} can be nontrivial.
Here we discuss possible approaches.
Note that for any point $(\pb,\thetab)$, $\rho\big(\widetilde{E}(\pb,\thetab)\big)$ is the absolute value of some eigenvalue of $\widetilde{E}$. Thus, of interest are the following derivatives of eigenpairs of a matrix $\widetilde{E}$.

\begin{theorem}\label{Thm:derivative}
  Suppose that, for all $\pb$ in an open neighborhood of $\hat{\pb}$, $\lambda(\pb)$ is a simple eigenvalue of $\widetilde{E}(\pb)$ with right eigenvector $\xb(\pb)$ and left eigenvector $\yb(\pb)$ such that $\yb^T\xb(\pb) \neq 0$ and that $\widetilde{E}(\pb)$, $\lambda(\pb)$, and either $\xb(\pb)$ or $\yb(\pb)$ are differentiable at $\pb = \hat{\pb}$.
Then, at $\pb=\hat{\pb}$,
\begin{equation*}
          \frac{d\lambda}{dp_j}=\frac{\yb^T\frac{d\widetilde{E}}{dp_j}\xb}{\yb^T\xb} \,\,{\rm and}\,\,\,\,
          \frac{d|\lambda|}{dp_j}={\rm Re}\Big(\frac{\overline{\lambda}}{|\lambda|} \frac{d\lambda}{dp_j}\Big), \,\,{\rm for} \,\,j=1,2,\cdots,n,
\end{equation*}
where $\overline{\lambda}$ is the complex conjugate of $\lambda$, ${\rm Re}(z)$ is the real part of $z\in \mathbb{C}$, and the second expression only holds if $\lambda(\hat{\pb})\neq 0$.
\end{theorem}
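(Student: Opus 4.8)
The plan is to use the classical first-order eigenvalue perturbation argument, in which the eigenvalue derivative is isolated by projecting the differentiated eigenvalue equation onto the left eigenvector so that the unknown eigenvector derivative drops out. Assuming the right eigenvector $\xb(\pb)$ is the one that is differentiable, I would begin from the defining relation $\widetilde{E}(\pb)\xb(\pb)=\lambda(\pb)\xb(\pb)$ and differentiate with respect to $p_j$ at $\pb=\hat{\pb}$ to obtain
\begin{equation*}
\frac{d\widetilde{E}}{dp_j}\xb+\widetilde{E}\frac{d\xb}{dp_j}=\frac{d\lambda}{dp_j}\xb+\lambda\frac{d\xb}{dp_j}.
\end{equation*}
Left-multiplying by $\yb^T$ and invoking the left-eigenvector identity $\yb^T\widetilde{E}=\lambda\yb^T$ makes the two terms carrying $d\xb/dp_j$ cancel, leaving $\yb^T\frac{d\widetilde{E}}{dp_j}\xb=\frac{d\lambda}{dp_j}\,\yb^T\xb$. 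Dividing by $\yb^T\xb\neq 0$, which is available by hypothesis, yields the first formula.

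The role of the hypothesis that \emph{either} $\xb$ or $\yb$ be differentiable is precisely that this cancellation can be carried out from whichever side the differentiable eigenvector lives on. If instead only $\yb(\pb)$ is known to be differentiable, I would run the mirror computation: differentiate the left-eigenvector relation $\yb^T\widetilde{E}=\lambda\yb^T$, right-multiply by $\xb$, and use $\widetilde{E}\xb=\lambda\xb$; now the terms carrying $d\yb^T/dp_j$ cancel and the identity $\yb^T\frac{d\widetilde{E}}{dp_j}\xb=\frac{d\lambda}{dp_j}\,\yb^T\xb$ reappears verbatim. The key point worth emphasizing is that in each case one never needs the derivative of the eigenvector itself; the projection annihilates it, so differentiability of a \emph{single} eigenvector suffices.

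For the modulus I would write $|\lambda|^2=\lambda\overline{\lambda}$ and differentiate in the real variable $p_j$, using $d\overline{\lambda}/dp_j=\overline{d\lambda/dp_j}$. This gives
\begin{equation*}
2|\lambda|\frac{d|\lambda|}{dp_j}=\overline{\lambda}\frac{d\lambda}{dp_j}+\lambda\,\overline{\frac{d\lambda}{dp_j}}=2\,{\rm Re}\Big(\overline{\lambda}\frac{d\lambda}{dp_j}\Big),
\end{equation*}
and dividing by $2|\lambda|$ produces the second formula. This final division is legitimate exactly when $\lambda(\hat{\pb})\neq 0$, which matches the stated restriction.

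There is no deep obstacle here, as the result is a standard perturbation identity; the only genuine delicacy is bookkeeping the two differentiability cases so that the eigenvector-derivative term is guaranteed to cancel, and flagging the two places where the hypotheses $\yb^T\xb\neq 0$ and $\lambda\neq 0$ are used. The assumed simplicity of $\lambda(\pb)$ on a neighborhood of $\hat{\pb}$ is what makes the eigenpairs (and in particular the chosen differentiable eigenvector) well-defined and locally consistent, so that the term-by-term differentiation of the eigenvalue equation is meaningful.
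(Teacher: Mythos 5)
Your proposal is correct and follows essentially the same route as the paper's proof: differentiate the eigenvalue equation, project onto the left eigenvector so the eigenvector-derivative term cancels, divide by $\yb^T\xb\neq 0$, and obtain the modulus formula by differentiating $\lambda\overline{\lambda}$ and dividing by $|\lambda|\neq 0$. Your explicit treatment of the mirror case (differentiating $\yb^T\widetilde{E}=\lambda\yb^T$ when only $\yb$ is differentiable) is stated in the paper only as a one-line remark, so your write-up is, if anything, slightly more complete on that point.
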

\begin{proof}
Differentiating both sides of $\widetilde{E} \xb=\lambda \xb$, we have
\begin{equation}\label{derivativeA}
  \frac{d(\widetilde{E}\xb)}{d p_j} =\frac{d\widetilde{E}}{dp_j}\xb+ \widetilde{E}\frac{d\xb}{dp_j}=\frac{d \lambda}{dp_j}\xb+\lambda\frac{d\xb}{dp_j}.
\end{equation}
Multiplying by $\yb^T$ on the left side of \cref{derivativeA}, we obtain
\begin{equation*}
  \yb^T\frac{d\widetilde{E}}{dp_j}\xb+ \yb^T\widetilde{E}\frac{d\xb}{dp_j}=\yb^T\frac{d \lambda}{dp_j}\xb+\yb^T\lambda\frac{d\xb}{dp_j}.
\end{equation*}
Noting that $\yb^T\widetilde{E}=\lambda \yb^T$ and since $\yb^T\xb\neq 0$,
we arrive at the first statement,
$\frac{d\lambda}{dp_j}=\frac{\yb^T\frac{d\widetilde{E}}{dp_j}\xb}{\yb^T\xb}.$
The same conclusion holds if, instead, we differentiate $\yb^T\widetilde{E} = \lambda\yb^T$.
For $\lambda\neq 0$, we rewrite $|\lambda|=\sqrt{\lambda\overline{\lambda}}$ and see that
 \begin{eqnarray*}
   \frac{d|\lambda|}{dp_j} \, = \, \frac{d}{dp_j}(\lambda\overline{\lambda})^{1/2}
   &=& \frac{1}{2|\lambda|}\Big(\frac{d\lambda}{dp_j}\overline{\lambda} +\lambda\frac{d\overline{\lambda}}{dp_j}\Big)\\
    &=& \frac{1}{2|\lambda|}2 {\rm Re}\Big(\frac{d\lambda}{dp_j}\overline{\lambda}\Big) \,
     = \, {\rm Re} \Big(\frac{\overline{\lambda}}{|\lambda|}\frac{d\lambda}{d p_j}\Big),
 \end{eqnarray*}
which is the second statement.
\end{proof}

\begin{remark}
  Where $\widetilde{E}$ is diagonalizable, one can easily see that for any right eigenvector, $\xb$, there always exists a left eigenvector, $\yb$, such that $\yb^T\xb \neq 0$.  This follows from writing the eigenvector decomposition of $\widetilde{E} = V\Lambda V^{-1}$, so that the right eigenvectors are defined by $\widetilde{E}V = V\Lambda$ (i.e., any right eigenvector is a linear combination of columns of $V$) and the left eigenvectors are defined by $V^{-1}\widetilde{E} = \Lambda V^{-1}$ (i.e., any left eigenvector is a linear combination of rows of $V^{-1}$).  For any (scaled) column of $V$, there is a unique (scaled) row of $V^{-1}$ such that $\yb^T\xb \neq 0$ since $V^{-1}V = I$.  Thus, for eigenvalues of multiplicity 1, for any right eigenvector, $\xb$, there is a unique $\yb$ (up to scaling) such that $\yb^T\xb \neq 0$.  For eigenvalues of multiplicity greater than one, for any eigenvalue, $\lambda$, there exist multiple eigenvectors, $\xb$, taken as linear combinations of columns of $V$.  Taking the same linear combination of rows of $V^{-1}$ to define the left eigenvector, $\yb^T$, still satisfies the requirement.  In the non-diagonalizable case, this argument can be extended to simple or semi-simple eigenvalues within the Jordan normal form, $\widetilde{E} = VJV^{-1}$, but not directly to the degenerate case. While there is a long history of study of the question of differentiability of eigenvalues and eigenvectors (see, for example, \cite{magnus1985differentiating,MR1061136,lancaster1964eigenvalues}), we are unaware of results that cover the general case.
\end{remark}

In our numerical results, we employ the derivative expressions noted in \cref{Thm:derivative}. We note that the restrictive assumptions of \cref{Thm:derivative} may not always apply and, hence, the expressions at best correspond to subderivatives.  We emphasize that this is an important practical issue, as non-normal systems are common as components in multigrid methods, such as with (block) Gauss-Seidel or Uzawa schemes used as relaxation methods.  Even for simple examples, we observe frequency and parameter pairs where the symbol of a component of the multigrid method has either orthogonal left- and right-eigenvectors or non-differentiable eigenvectors.  Thus, there are more fundamental issues associated with the use of \cref{Thm:derivative} in addition to the usual numerical issues in computing eigenvalues of (non-normal) matrices.  Nonetheless, we successfully use the expressions above in this work.  A slightly simpler theoretical setting arises when considering $\|\widetilde{E}\|$ in place of $\rho(\widetilde{E})$ in \cref{max-problem}, similar to what was considered for the parallel-in-time case in \cite{de2019optimizing}.  The differences between this and the traditional approach of \cref{max-problem} are significant, though, so we defer these to future work.

In some settings, we can readily compute $\frac{d\widetilde{E}}{dp_j}$, providing the derivatives of the relevant eigenvalues as analytical expressions.
When derivative information is unavailable, one (sub)derivative approximation arises by considering central finite differences:
\begin{equation}\label{eq:subderivative}
  \frac{d\rho(\widetilde{E}(\pb,\thetab))}{dp_j} \approx\frac{\rho\big(\widetilde{ E}(\pb+t\boldsymbol{ e}_j,\thetab)\big)-\rho\big(\widetilde{ E}(\pb-t\boldsymbol{ e}_j,\thetab)\big)}{2t}, \qquad j=1, \ldots, n,
\end{equation}
where $\boldsymbol{e}_j$ is the $j$th canonical unit vector (acknowledging that this approximation can be problematic in cases of multiplicity or increasing $t>0$). We use $t=10^{-6}, 10^{-8}$, and $10^{-12}$ in our numerical experiments but note that automatic selection of $t$ \cite{more2011edn,pernice-walker-1998} and/or high-order finite-difference approaches can also be used.
Algorithmic differentiation
is also possible but presents additional challenges that we do not address here.
In the results below, we distinguish between optimization using analytical derivatives and central-difference approximations; furthermore, we use the term ``derivative'' to include the particular (sub)derivative obtained by analytical calculation or central-difference approximation.

\section{Numerical Results}\label{Numer}

We now study the above optimization approaches on 1D, 2D, and 3D problems obtained from the Fourier representation in \cref{Two-grid-symbol} of the two-grid error propagation operator \cref{Two-grid-matrix}.
Although we \revise{primarily} examine relaxation schemes (different $M_h$) in the two-grid method, other parameters can also be considered,  such as optimizing the coefficients in the grid-transfer operators.
All operators with a superscript tilde denote LFA symbols.  We emphasize that the subject of the optimization considered here is the convergence factor of a stationary iterative method, which should lie in the interval $[0,1]$ and is typically bounded away from both endpoints of the interval.  As such, optimization is needed only to one or two digits of accuracy, since the difference in performance between algorithms with convergence factors that differ by less than 0.01 is often not worth the expense of finding such a small improvement in the optimal values.  Likewise, the algorithms are generally not so sensitive to the parameter values beyond two digits of accuracy.

For our test problems, we consider both finite-difference and finite-element discretizations. We use two types of finite-element methods for  the Laplacian.  First, we consider the approximation for 1D meshes, using continuous
\revise{piecewise linear ($P_1$) approximations}. Second, we consider the
approximation for structured meshes of triangular elements using continuous \revise{piecewise quadratic ($P_2$) approximations}.  Similarly, we consider both finite-difference and finite-element discretizations of the Stokes problem, using the classical staggered (Marker and Cell, or MAC) finite-difference scheme, \revise{stabilized} equal-order bilinear ($Q_1$) finite-elements on quadrilateral meshes, and the stable Taylor-Hood ($P_2$-$P_1$) finite-element discretization on triangular meshes.  For the 3D optimal control problem, we consider the trilinear ($Q_1$) finite-element discretization on hexahedral meshes.
For details on these finite-element methods, see \cite{elman2006finite}.

\subsection{Optimization Methods}
We consider brute-force discretized search (\Cref{alg:brute-force}) as well as  different modes of two optimization approaches.

The first (``HANSO-FI'') is based on running the \HANSO\ code from \cite{LewOve12} to minimize the nonsmooth function in \cref{inner-function-2} resulting from a fixed inner sampling using $\Ntheta^d$ samples.
\revise{We chose \HANSO\ because it is capable of handling nonsmooth, nonconvex objectives. To the best of our knowledge, there are few such off-the-shelf solvers that have any form of guarantee concerning the minimization of \cref{inner-function-2}, and none that directly solve \cref{opt-problem-3}. We note that, given access to an oracle returning an (approximate) subgradient of \cref{inner-function-2}, solvers for smooth, nonconvex optimization could also be applied to the minimization of \cref{inner-function-2}; although such application can be problematic, 
it has been shown to work well in practice for some nonsmooth problems (see, e.g.,
 \cite{LewOve12}).
}

The second is based on running the \ROBOBOA\ code from \cite{menickelly2017derivative} to solve \cref{opt-problem-3} based on its adaptive sampling of the inner problem. In both cases we use either analytical (denoted by ``ROBOBOA-D, AG'') or central-difference approximate (denoted by ``ROBOBOA-D, FDG'' with the difference parameter $t$ indicated) gradients; for \ROBOBOA\ we also consider a derivative-free variant (``ROBOBOA-DF''), which does not directly employ analytical or approximate gradients.

For the test problems considered here, the parameters are damping parameters \revise{associated with either the relaxation scheme or the coarse-grid correction process};
thus we set $\cD=\Rp^d$.
Each method is initialized at the point corresponding to $p_i=0.5$ for $i=1,\ldots,n$, except for the $P_1$ discretization for the Laplacian with single relaxation and $P_2$ discretization for the Laplacian, where we use $p_i=0.1$.
\subsection{Performance Measures}
In all cases we measure performance in terms of the number of function (i.e., $\rho(\widetilde{E}(\cdot, \cdot))$) evaluations; in particular, we do not charge the cost associated with an analytical gradient, but we do charge the $2n+1$ function evaluations for the central-difference approximation.

\revise{Our first metric  for optimizing $\rho$} is based on a fixed, equally spaced sampling of $T^{\rm low}$:
\begin{equation}
\label{eq:rho_psi_metric}
\rho_{\Psi_*}(\hat{\pb}):=\Psi_{\thetab(\Ntheta^d=33^d)}(\hat{\pb}),
\end{equation}
where $\hat{\pb}$ is the best $\pb$ (with respect to this metric) obtained by the tested method.
When $\Ntheta$ is odd (as in \cref{eq:rho_psi_metric}), the sampling points include the frequency $\thetab=0$.
The symbol of the coarse-grid operator $L_H$ in \cref{Two-grid-matrix} is not invertible, since the vector with all ones is an eigenvector associated with eigenvalue zero. For our numerical tests, when $\thetab=0$, we approximate the limit
(which does exist) by setting the components of $\thetab$ to $10^{-7}$, \revise{a suitably small value, chosen experimentally to give a reliable approximation of the limit}.

\begin{remark}
In \cref{eq:rho_psi_metric}, we take $\Ntheta$ to be odd since, in most cases, the frequency near zero plays an important role in measuring and understanding the performance of the resulting multigrid methods.
\end{remark}

For the Laplace problems, we also show a second metric based on measured two-grid performance of the resulting multigrid methods, to validate the approximate parameters obtained by the optimization approaches. We consider the homogeneous problem, $A_h x_h =b= 0$, with discrete solution $x_{h} \equiv 0$, and start
  with a random initial $x_{h}^{(0)}$ to test the two-grid
  convergence factor. Rediscretization is used to define the coarse-grid operator, and
  we consider  Dirichlet boundary conditions for the $P_1$ discretization  and  periodic boundary conditions for the $P_2$ discretization. We
  use the defects \revise{after 100 two-grid cycles}, $d_{h}^{(100)}$ (with
  $d_h^{(100)}=b-A_hx_{h}^{(100)}$), to experimentally measure
performance of
the convergence factor
through the two estimates
\begin{align}
\begin{split}
  \rho_{m,1} :=&  \left(\|d_h^{(100)}\|_2/\|d_h^{(0)}\|_2\right)^{1/100},
\\
\rho_{m,2} :=& \|d_h^{(100)}\|_2/\|d_h^{(99)}\|_2;
\end{split}
\label{eq:defi-rho2}
\end{align}
see \cite{MR1807961}.
For the two-grid methods, the fine-grid mesh size is $h = \frac{1}{64}$.

We additionally consider an approximate measure of first-order stationarity of obtained solutions $\hat{\pb}$, denoted $\sigma(\hat{\pb})$;
details concerning $\sigma(\hat{\pb})$ are provided in \Cref{sec:params}.
Moreover, \Cref{sec:params} provides a summary of the approximate solutions $\hat{\pb}$
obtained by various solvers, as well as corresponding values of $\rho_{\Psi_*}(\hat{\pb})$ and $\sigma(\hat{\pb})$.

\revise{The examples considered below primarily take the form of validating our approach, by reproducing analytical solutions to the LFA optimization problems considered.  In some cases, these come from optimizing the LFA smoothing factor (as in \Cref{subsec:Q1-Q1-Stokes} and \Cref{sec:Control3D}), while others come from optimizing the LFA two-grid convergence factor.  In some cases (such as for the classical 1D Poisson operator with weighted-Jacobi relaxation, or for the various relaxation schemes for the MAC-scheme and Q1-Q1 finite-element discretizations of Stokes), corresponding analytical results are known in the literature.  For others (1D Poisson with 2 sweeps of weighted Jacobi and the 3D control problem), the results are new but can be verified analytically.  Finally, we also consider some cases where analytical results are unknown to us, namely for factor-three coarsening of 1D Poisson, the P2 discretization of Poisson in 2D, and the P2-P1 discretization of Stokes.}

\subsection{Poisson Equation}\label{sec:Poisson1D}
We first consider the
Poisson problem
\begin{equation*}
  \begin{aligned}
  -\Delta u(x) =f(x),\,\,\,\, x\in\Omega, \label{Laplace}\\
   u(x)=g(x),\,\,\, x\in \partial \Omega,\\
   \end{aligned}
\end{equation*}
using  the $P_1$ discretization in 1D and the $P_2$ discretization in 2D with simple Jacobi relaxation.
In 1D, there are two harmonics for each $\theta\in T^{\rm low}$, $\theta$ and $\theta+\pi$. For each harmonic, the LFA representation for the Laplace operator in 1D using the $P_1$ discretization is a scalar. We use standard stencil notation (see, for example, \cite{wienands2004practical}) to describe the discrete operators. The stencil for $-\Delta$ is
$
  L_h =\frac{1}{h}\begin{bmatrix} -1 & 2 & -1
                  \end{bmatrix}.
$
For  weighted Jacobi relaxation, we have
\begin{equation*}
  \mathcal{S}_h = I -p_1 M_h^{-1}L_h,
\end{equation*}
where the stencil for the diagonal matrix
$  M_h = \frac{1}{h}\begin{bmatrix}  2
                  \end{bmatrix}$.
We consider linear interpolation, with stencil representation
\begin{equation*}
P_h=\frac{1}{2}
\left ]
  \begin{tabular}{ccc}
  1 \\
  2  \\
  1
  \end{tabular}
\right [_{2h}^h,
\end{equation*}
and  $R_h =P_h^T$ for the restriction in the two-grid method. We denote $Q=I - P_hL_H^{-1}R_hL_h$, $S_1=I-p_{1} M_h^{-1}L_h$, and $S_2=I-p_{2} M_h^{-1}L_h$,
so that the error-propagation operator is $S_2^{\nu_2} Q S_1^{\nu_1}$ for a $TG(\nu_1,\nu_2)$-cycle (i.e., a two-grid $V(\nu_1,\nu_2)$-cycle).

We first consider $\nu_1=1,\nu_2=0$, and $E_1= QS_1$. According to \Cref{formulation-symbol} and \cref{Two-grid-symbol}, we have
\begin{equation*}
\widetilde{L}_{H}(2\theta) = \frac{1-\cos(2\theta)}{h},\,\quad
  \widetilde{\boldsymbol{\mathcal{L}}}_h = \frac{4}{h} \begin{pmatrix}
  s^2 & 0\\
  0 & c^2
  \end{pmatrix}, \,\quad \widetilde{\boldsymbol{\mathcal{M}}}_h = \frac{2}{h}\begin{pmatrix}
  1 & 0\\
  0 & 1
  \end{pmatrix},
\end{equation*}
where $s=\sin\frac{\theta}{2}$ and $c = \cos\frac{\theta}{2}$, and
\begin{equation*}
  \widetilde{ \boldsymbol{P}}_h = \frac{1}{2}\begin{pmatrix} 1+\cos(\theta)\\
  1-\cos(\theta)
  \end{pmatrix},\,\quad \widetilde{\boldsymbol{ R}}_h = 2\widetilde{\boldsymbol{P}}_h^T.
\end{equation*}
By a standard calculation, we have
\begin{equation*}
  \widetilde{\boldsymbol{ Q}} = \begin{pmatrix}s^2 & -c^2\\ -s^2 & c^2 \end{pmatrix}, \,\quad \widetilde{\boldsymbol{S}}_1 = \begin{pmatrix} 1-2 p_1 s^2 & 0 \\ 0 & 1-2p_1 c^2\end{pmatrix},
 \end{equation*}
which yields
\begin{equation}
\label{eq:Q1Lp}
          \widetilde{E}_1= \widetilde{\boldsymbol{ Q}}\widetilde{\boldsymbol{ S}}_1 = \begin{pmatrix} s^2(1-2p_1 s^2)  & -c^2(1-2p_1 c^2)\\
          -s^2(1-2p_1 s^2) & c^2(1-2p_1 c^2)\end{pmatrix}.
\end{equation}

\Cref{Q1-1D-one-sweep} plots the convergence factor $ \rho \big(\widetilde{ E}_1(p_1,\theta)\big)$ as a function of $(p_1, \theta)$. For fixed $p_1$,
we can see that $\displaystyle\max_{\theta}\rho\big(\widetilde{E}_1(p_1,\theta)\big)$ is achieved at either $\theta=\pm \frac{\pi}{2}$ or $\theta=0$.
There is a unique point $p_1=\frac{2}{3}$, where we observe that
$\Psi_{\Tlow}$ in \cref{inner-function} is nondifferentiable, with
$\rho\big(\widetilde{E}_1(\frac{2}{3},0)\big)=\rho\big(\widetilde{E}_1(\frac{2}{3},\frac{\pi}{2})\big)$. For values of $p_1>1$, divergence is observed.
The behavior seen also supports the importance of an appropriate discretization in $\theta$: \Cref{Q1-1D-one-sweep} (right) illustrates that a coarse sampling with an even-valued $\Ntheta$ will result in an approximation \cref{inner-function-2} whose minimum value does not accurately reflect the true convergence factor.

To derive the analytical solution of this minimax problem for $p_1\leq 1$, we note that the two eigenvalues of $\widetilde{E}_1$ are $\lambda_1= 0$ and $\lambda_2 = 1-4p_1\Big((s^2-\frac{1}{2})^2+\frac{1}{4}\Big)$.
Since $\theta\in[-\frac{\pi}{2},\frac{\pi}{2}]$ and $s^2 =\sin^2(\frac{\theta}{2}) \in [0,\frac{1}{2}]$,  it follows that $\max_{\theta} |\lambda_2| = \max\{|1-2p_1|, |1-p_1|\}$. Thus,
\begin{eqnarray*}
\rho_{\rm opt} &=& \min_{0\leq p_1 \leq 1}\max_{\theta \in[-\frac{\pi}{2},\frac{\pi}{2}]}\Big\{\rho\big(\widetilde{E}_1(p_1, \theta)\big)\Big\}\nonumber \\
  &=&\min_{0\leq p_1 \leq 1} \max_{\theta \in[-\frac{\pi}{2},\frac{\pi}{2}]} \Bigg\{\Big|1-4p_1\big((s^2-1/2)^2+1/4\big)\Big|\Bigg\} =\frac{1}{3},
\end{eqnarray*}
if and only if $p_1=\frac{2}{3}$. \revise{This is, of course, the well-known optimal weight for
Jacobi relaxation for this problem, which we compute as a verification of the approach proposed here.}

\begin{figure}
\centering
\includegraphics[width=.48\linewidth]{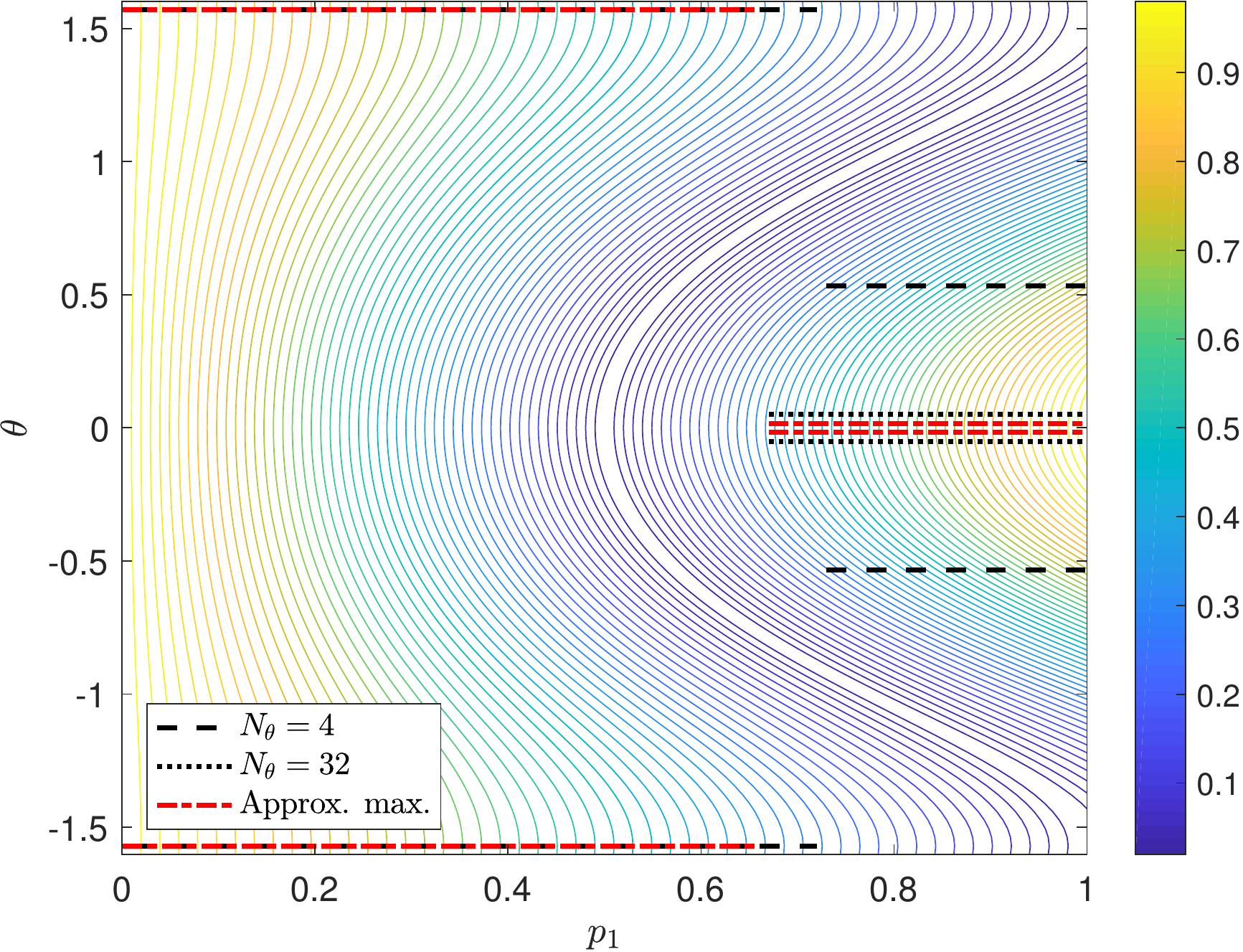} \hfill \includegraphics[width=.48\linewidth]{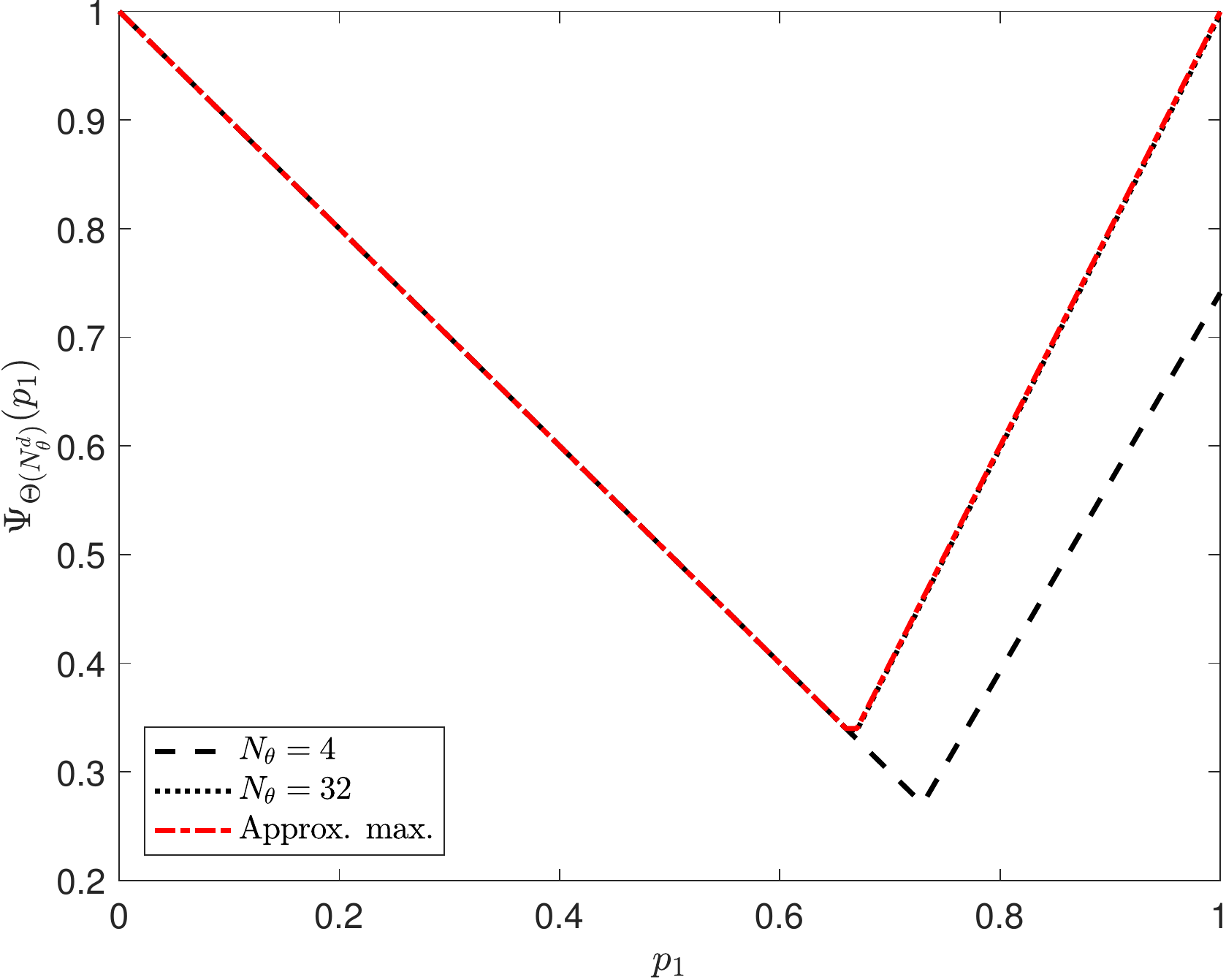}
\caption{Left: Contours of $\rho\big(\widetilde{E}_1(p_1,\theta)\big)$ as a function of $\theta$ and $p_1$ for the $P_1$ discretization of the 1D Laplacian in \cref{eq:Q1Lp}. The overset lines
show the inner-maximizing $\theta$ value
based on different discretizations and approximations in $\theta$.
Right: The corresponding
$\Psi$
 approximations illustrate the differences in the outer function being minimized, with all cases exhibiting nondifferentiability.
}\label{Q1-1D-one-sweep}
\end{figure}

\Cref{Odd-compare-LP1} shows the performance of \ROBOBOA\ and \HANSO\ variants (using analytical derivatives) applied to this optimization problem.
All variants shown achieve near-optimal performance in 100--400 function evaluations, and the two performance metrics show excellent agreement.
For comparison, the brute-force discretized search of \Cref{alg:brute-force} (with $N_p=20$ evenly spaced points in $[0,1]$ and $\Ntheta=32$) takes 640 function evaluations and achieves $\rho_{\Psi_*}=0.35$ with $p_1=0.65$.
Any sampling strategy that happens to sample the points $(\frac{2}{3},0)$ or $(\frac{2}{3},\pm \frac{\pi}{2})$ should, of course, provide the exact solution in this simple case. Because the optimal points occur at $\theta\in \{0,\pm\frac{\pi}{2}\}$, \HANSO\ can successfully achieve the optimum at low cost using coarse sampling in $\theta$ (in this case, with $\Ntheta=3$) that includes these points.  Note that, for this problem, the LFA symbol offers a near-perfect approximation of the expected performance, as can be seen comparing the left and right panels of \Cref{Odd-compare-LP1}.

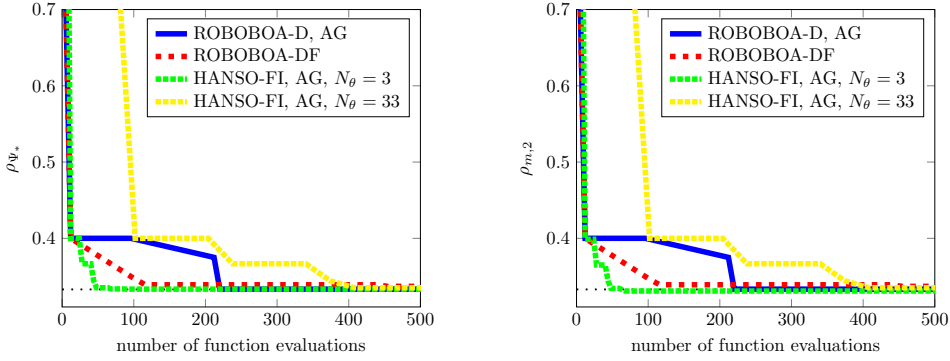
\begin{figure}
\centering
\begin{minipage}[b]{0.475\textwidth}
\centering
\begin{tikzpicture}[baseline, scale=0.695]
    \begin{axis}[xlabel = {number of function evaluations},
                         ylabel = {$\rho_{\Psi_*}$},
                         legend pos = north east,
                        ymin=0.31, ymax=0.7,
                        xmin =0, xmax =500,
                        legend cell align={left}
                             ]
	\draw[line width=1pt, loosely dotted, black] (0,0.333) -- (4000,0.333);
        \addplot  [line width=3pt,solid, blue]  table[x index=0, y index =1]{Q1SNH331.dat};
        \addplot  [line width=3pt,loosely dotted, red]   table[x index=0, y index =1]{Q1SNH332.dat};
        \addplot [line width=3pt, densely dash dot, green] table[x index=0,y index =1]{Q1SNH33.dat};
        \addplot [line width=3pt, densely dash dot, yellow] table[x index=0,y index =1]{Q1SNH333.dat};
        \addlegendentry{ROBOBOA-D, AG}
        \addlegendentry{ROBOBOA-DF}
        \addlegendentry{HANSO-FI, AG, $\Ntheta=3$}
        \addlegendentry{HANSO-FI, AG, $\Ntheta=33$}
\end{axis}
\end{tikzpicture}
\end{minipage}
\hfill
\begin{minipage}[b]{0.475\textwidth}
\centering
\begin{tikzpicture}[baseline, scale=0.695]
    \begin{axis}[xlabel = {number of function evaluations},
                         ylabel = {$\rho_{m,2}$},
                         legend pos = north east,
                        ymin=0.31, ymax=0.7,
                        xmin =0, xmax =500,
                        legend cell align={left}
                             ]
	\draw[line width=1pt, loosely dotted, black] (0,0.333) -- (4000,0.333);
        \addplot  [line width=3pt,solid, blue]  table[x index=0, y index =1]{Q1SVNH331.dat};
        \addplot  [line width=3pt,loosely dotted, red]   table[x index=0, y index =1]{Q1SVNH332.dat};
        \addplot [line width=3pt, densely dash dot, green] table[x index=0,y index =1]{Q1SVNH33.dat};
        \addplot [line width=3pt, densely dash dot, yellow] table[x index=0,y index =1]{Q1SVNH333.dat};
        \addlegendentry{ROBOBOA-D, AG}
        \addlegendentry{ROBOBOA-DF}
        \addlegendentry{HANSO-FI, AG, $\Ntheta=3$}
        \addlegendentry{HANSO-FI, AG, $\Ntheta=33$}
\end{axis}
\end{tikzpicture}
\end{minipage}
\caption{Optimization performance for Laplace in 1D with a single relaxation,  \cref{eq:Q1Lp}.
Left: predicted performance using \cref{eq:rho_psi_metric};  right: measured two-grid performance using $\rho_{m,2}$ (see \cref{eq:defi-rho2}).}\label{Odd-compare-LP1}
\end{figure}

If we consider $\nu_1=\nu_2=1$ and $p_1=\frac{2}{3}$, then $\rho_{\rm opt}=(\frac{1}{3})^2=\frac{1}{9}$.  A natural question is  whether we can improve the two-grid performance by using two different weights for pre- and post- relaxation. Thus, we consider
\begin{equation}
\label{eq:E2Laplace}
 \widetilde{E}_2 =\widetilde{\boldsymbol{ S}}_2\widetilde{\boldsymbol{ Q}}\widetilde{\boldsymbol{ S}}_1.
\end{equation}
For this problem,  $\lambda\big(\widetilde{E}_2\big) =\lambda\big(\widetilde{\boldsymbol{ S}}_2\widetilde{\boldsymbol{ Q}}\widetilde{\boldsymbol{ S}}_1\big) =\lambda\big(\widetilde{\boldsymbol{ Q}}\widetilde{\boldsymbol{ S}}_1\widetilde{\boldsymbol{ S}}_2\big)$; and, similar to the calculation above, we have
\begin{equation*}
  \widetilde{\boldsymbol{ Q}}\widetilde{\boldsymbol{ S}}_1\widetilde{\boldsymbol{ S}}_2 = \begin{pmatrix} (1-2p_1s^2)(1-2p_2s^2)s^2 &  -(1-2p_1c^2)(1-2p_2c^2)c^2\\
  -(1-2p_1s^2)(1-2p_2s^2)s^2 & (1-2p_1c^2)(1-2p_2c^2)c^2
   \end{pmatrix}.
\end{equation*}
The two eigenvalues of $\widetilde{E}_2$ are $\lambda_1=0$ and $\lambda_2=(1-2p_1s^2)(1-2p_2s^2)s^2+(1-2p_1c^2)(1-2p_2c^2)c^2$, which we note is symmetric about $p_1=p_2$. Therefore
\begin{eqnarray*}
\rho_{\rm opt} &=& \min_{\pb \in \Rp^2} \max_{\theta \in[-\frac{\pi}{2},\frac{\pi}{2}]}\Big\{\rho\big(\widetilde{ E}_2(\pb,\theta)\big)\Big\} \nonumber\\
  &=&\min_{\pb \in \Rp^2} \max_{\theta \in[-\frac{\pi}{2},\frac{\pi}{2}]} \Big\{\big|(1-2p_1s^2)(1-2p_2s^2)s^2+(1-2p_1c^2)(1-2p_2c^2)c^2\big|\Big\} = 0,
\end{eqnarray*}
with the
last equality obtained (independent of $\theta$)
if and only if $(p_1,p_2)$ is $(1,\frac{1}{2})$ or its symmetric equivalent $(\frac{1}{2},1)$.  \revise{Since $\tilde{E}_2$ is not the zero matrix, a zero spectral radius indicates convergence to the exact solution in at most two iterations of the two-grid method; while this result can be verified algebraically, to our knowledge it is new to the literature.}

The inner function of $\pb$ (i.e., \cref{inner-function}) is illustrated on the top right of \Cref{fig:newQ1-1D-two-sweep}, wherein the nondifferentiability with respect to $\pb$ is evident along two quadratic level curves. Approximations \cref{inner-function-2} of this inner function are shown in the top left ($\Ntheta=2$) and top middle ($\Ntheta=4$) plots of \Cref{fig:newQ1-1D-two-sweep}. For each case, the bottom row shows the corresponding inner (approximately) maximizing $\theta$ value.



\begin{figure}
\centering
\includegraphics[width=0.32\linewidth]{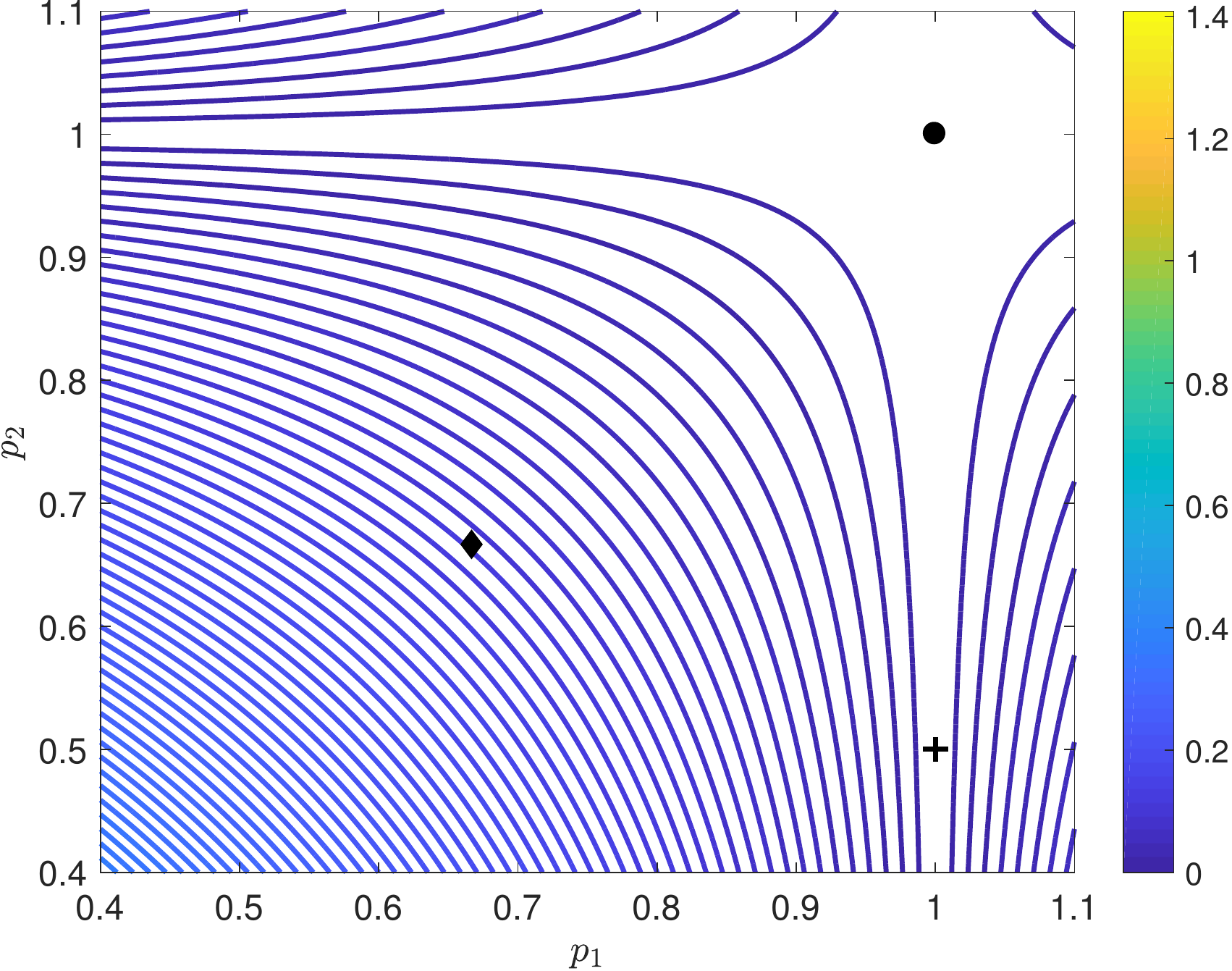} \hfill
\includegraphics[width=0.32\linewidth]{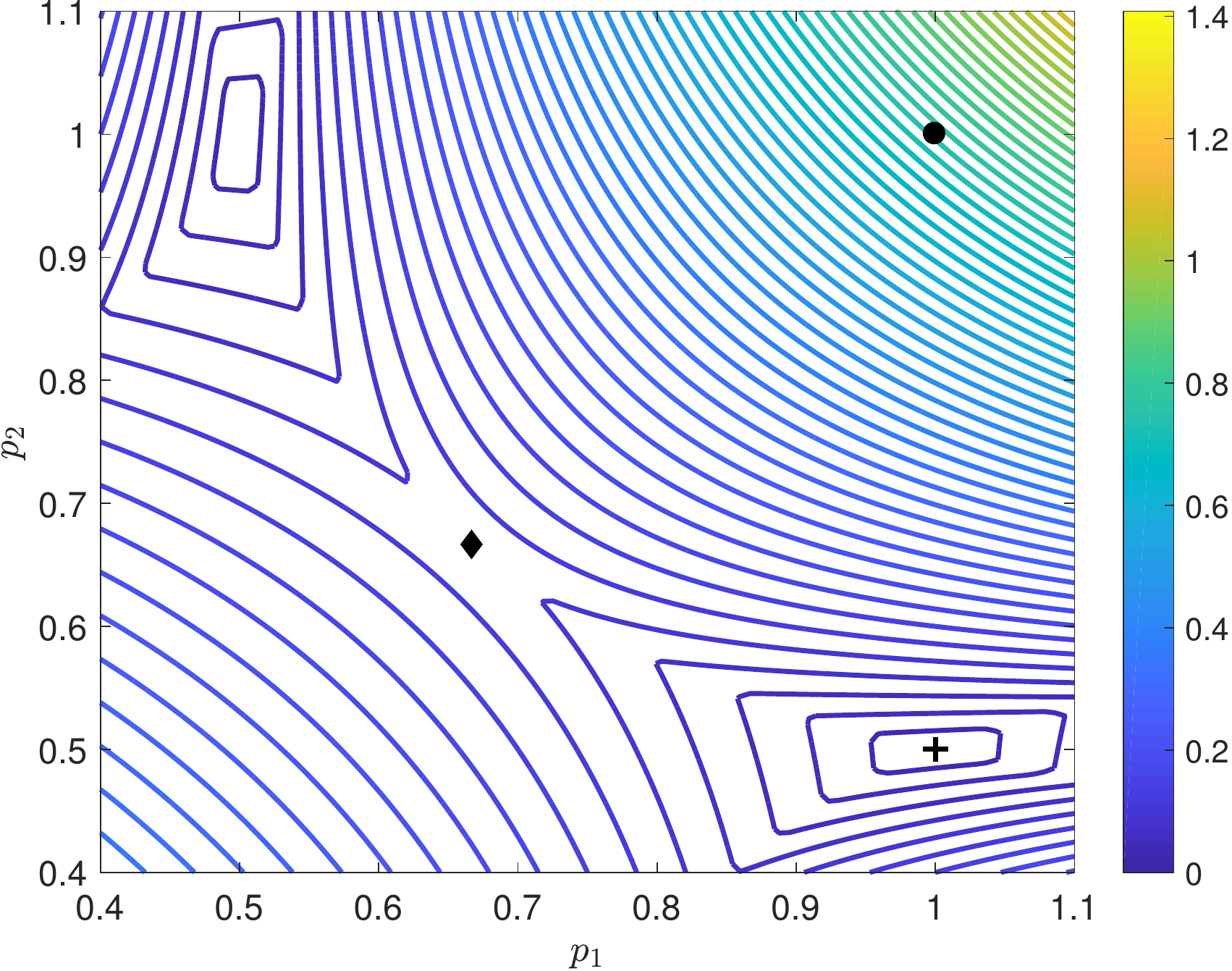}  \hfill
\includegraphics[width=0.32\linewidth]{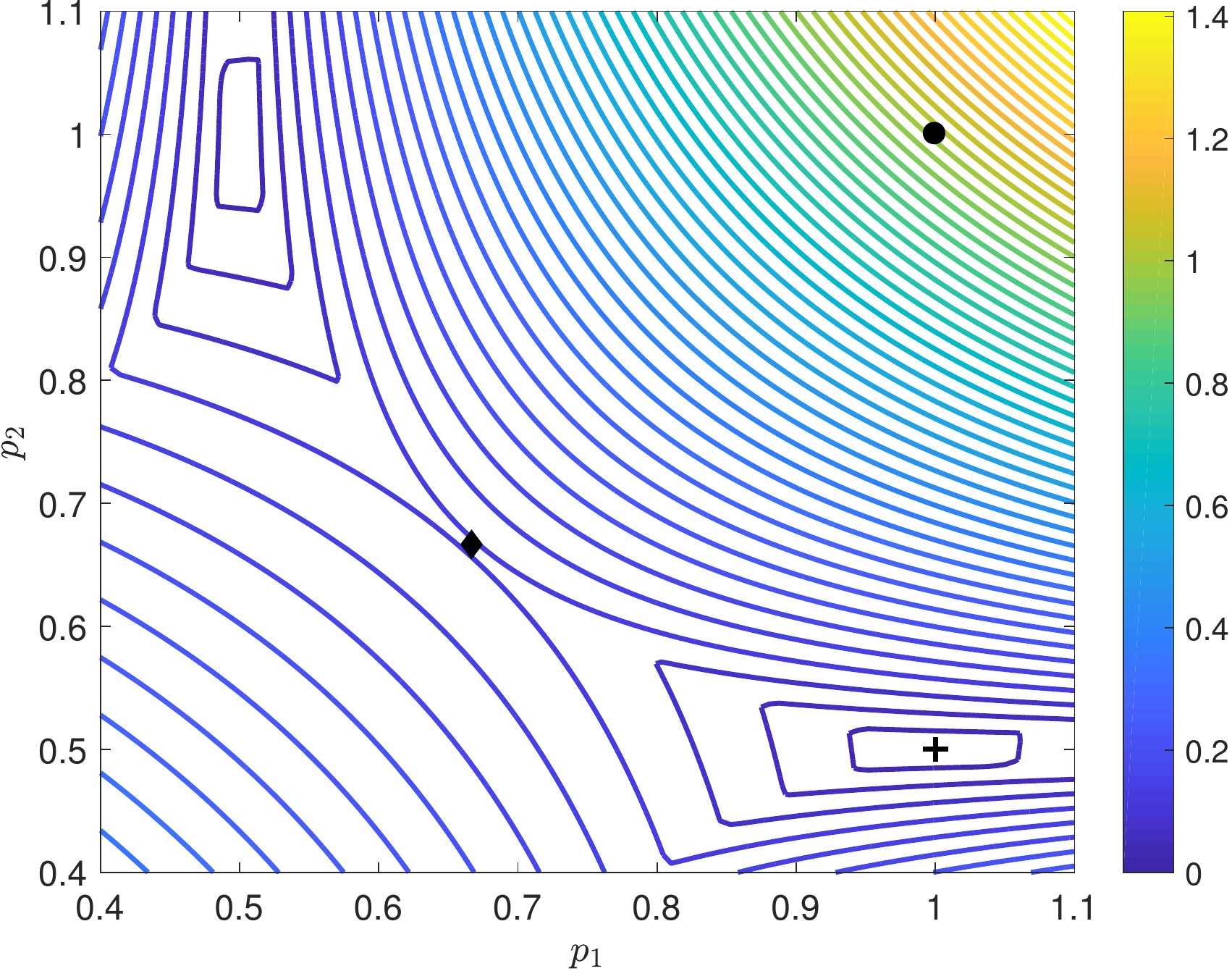}

\includegraphics[width=0.32\linewidth]{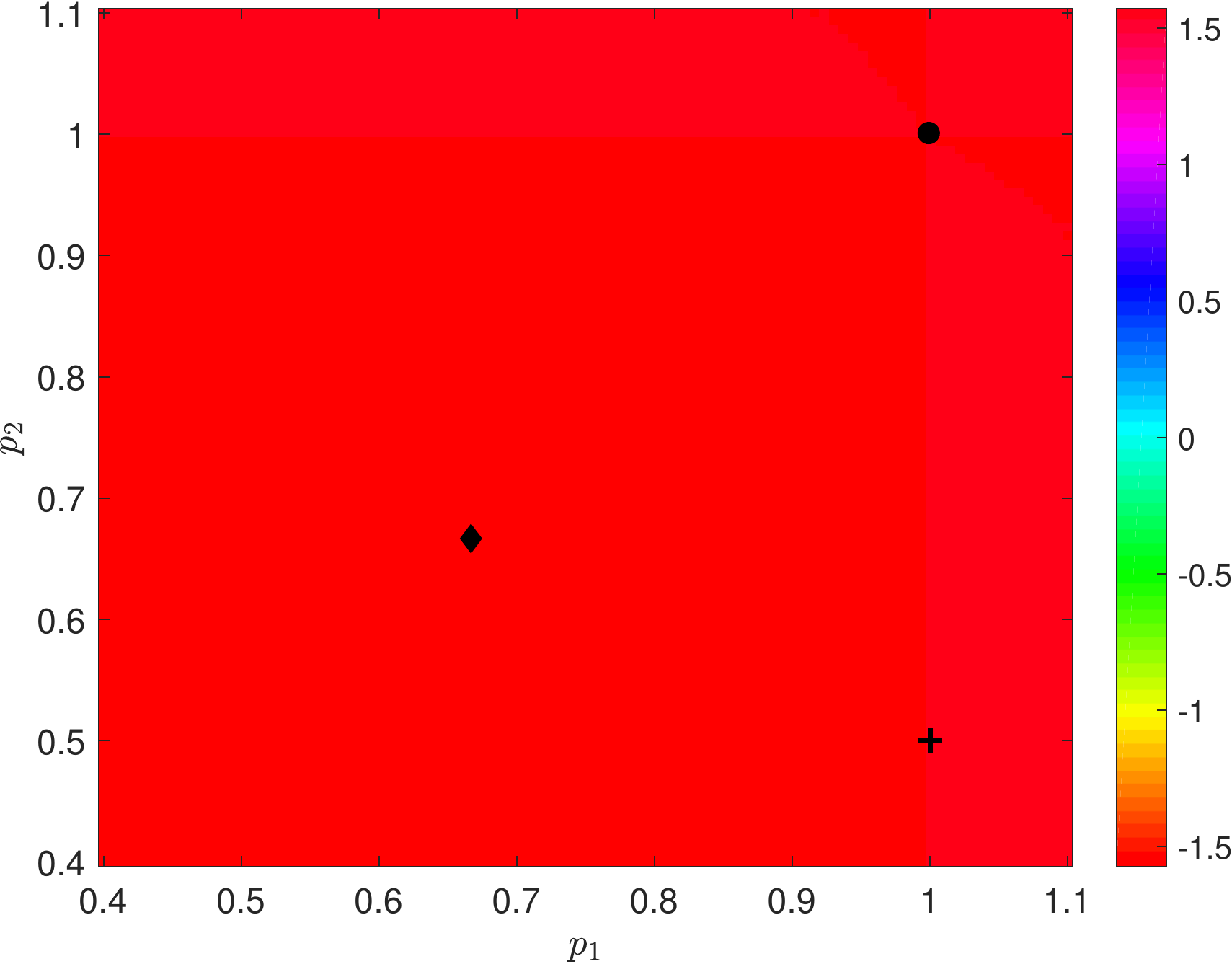} \hfill
\includegraphics[width=0.32\linewidth]{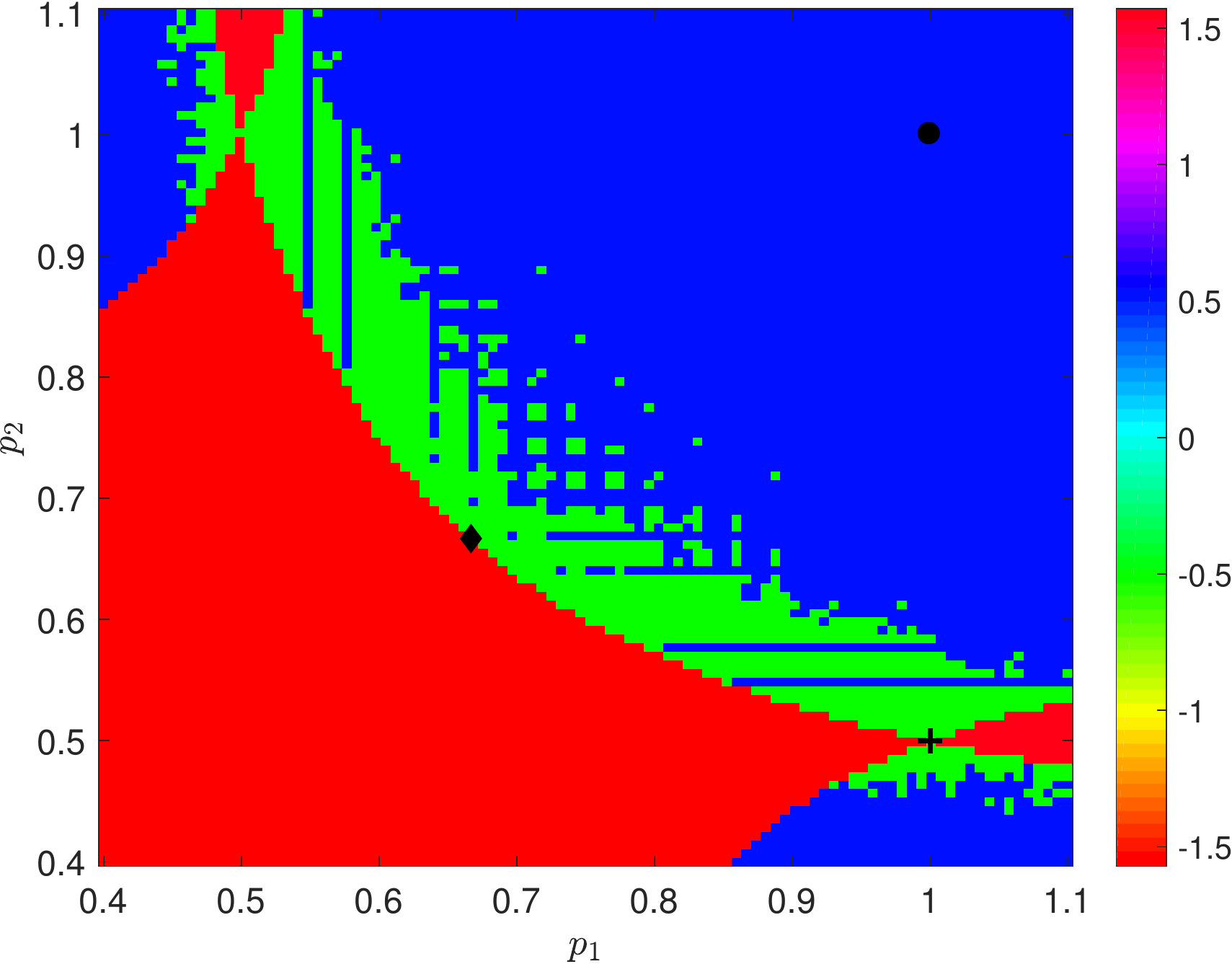}  \hfill
\includegraphics[width=0.32\linewidth]{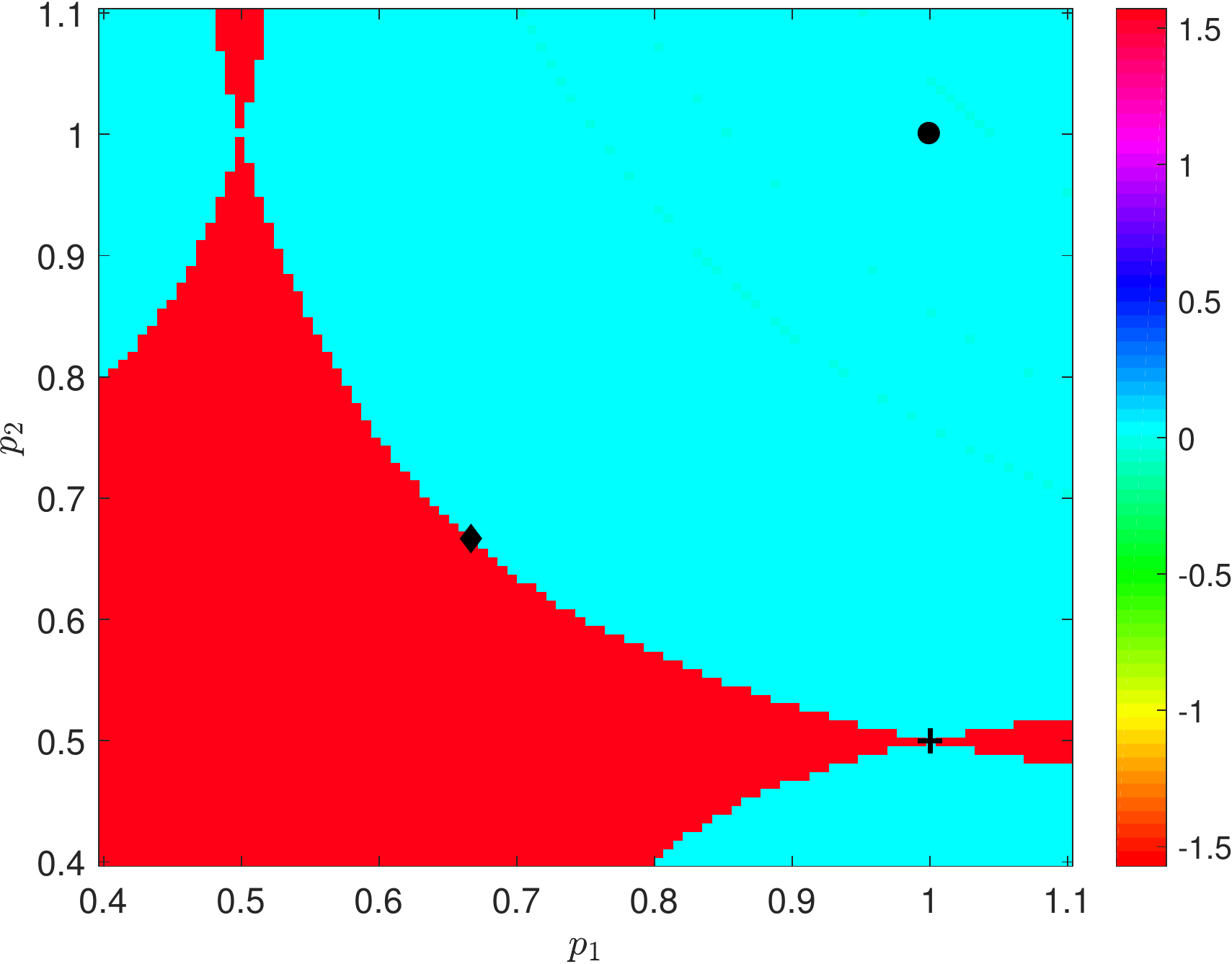}

\caption{Using 2 sweeps of Jacobi relaxation for $P_1$-Laplace in 1D (i.e., \cref{eq:E2Laplace}) as a function of $(p_1,p_2)$.
Top row: Contours of \cref{inner-function-2} using resolution levels (from left to right) $\Ntheta=2, \Ntheta=4$, and approximately solving \eqref{max-problem}.
Cross denotes the solution obtained by the \ROBOBOA\ variants, diamond denotes the solution obtained by \HANSO\ for $\Ntheta\geq 4$ resolutions, circle denotes
the solution obtained by \HANSO\ for $\Ntheta=2$ resolution.
Bottom row: Corresponding
maximizer $\theta$ in each approximation.}\label{fig:newQ1-1D-two-sweep}
\end{figure}

\Cref{Odd-compare-LP2-NH4} shows the performance (left: the LFA-predicted convergence factor using a fixed sampling of Fourier space, $\rho_{\Psi_*}$, as in \cref{eq:rho_psi_metric} with $\Ntheta=33$; right: measured two-grid convergence, $\rho_{m,1}$) of the three methods for this problem using analytical (or no) derivatives. We see that using \ROBOBOA\ with derivatives is successful, finding a good approximation to $\rho_{\rm opt}$ using fewer than 100 $\rho$  evaluations. \ROBOBOA\ without derivatives achieves a similar approximate solution but at a greater expense, requiring roughly 900 function evaluations to find a comparable approximation.
In contrast, \HANSO\ converges to a value of $\rho_{\Psi_*}\approx 0.11$.
The corresponding approximate solutions are illustrated in \Cref{fig:newQ1-1D-two-sweep}, which shows that \HANSO\ (in fact, independent of $\Ntheta$-sampling resolution once $N_\theta> 2$) is converging to a Clarke-stationary saddle point.
\revise{Because we employed the gradient sampling method within \HANSO, this observed convergence behavior is theoretically supported \cite{burke2005robust}.}
We again note good agreement between the LFA-predicted and measured two-grid performance.

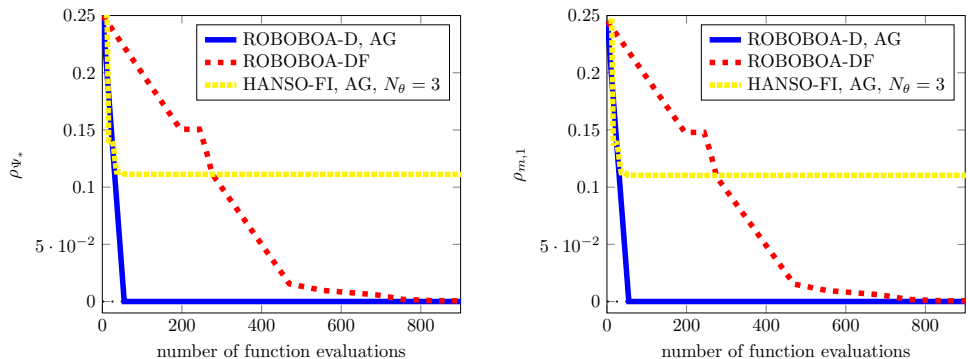
\begin{figure}
\centering
\begin{minipage}[b]{0.485\textwidth}
\centering
\begin{tikzpicture}[baseline, scale=0.695]
    \begin{axis}[xlabel = {number of function evaluations},
                         ylabel = {$\rho_{\Psi_*}$},
                         legend pos = north east,
                        ymin=-0.01, ymax=0.25,
                        xmin =0, xmax =900,
                        legend cell align={left}
                             ]
	\draw[line width=1pt, loosely dotted, black] (0,0.) -- (4000,0.);
        \addplot  [line width=3pt, solid, blue]  table[x index=0, y index =1]{Q1DNH31.dat};
        \addplot  [line width=3pt, loosely dotted, red]   table[x index=0, y index =1]{Q1DNH32.dat};
        \addplot [line width=3pt, densely dash dot, yellow] table[x index=0,y index =1]{Q1DNH33.dat};
        \addlegendentry{ROBOBOA-D, AG}
        \addlegendentry{ROBOBOA-DF}
        \addlegendentry{HANSO-FI, AG, $\Ntheta=3$}
\end{axis}
\end{tikzpicture}
\end{minipage}
\hfill
\begin{minipage}[b]{0.485\textwidth}
\centering
\begin{tikzpicture}[baseline, scale=0.695]
    \begin{axis}[xlabel = {number of function evaluations},
                         ylabel = {$\rho_{m,1}$},
                         legend pos = north east,
                        ymin=-0.01, ymax=0.25,
                        xmin =0, xmax =900,
                        legend cell align={left}
                             ]
	\draw[line width=1pt, loosely dotted, black] (0,0.) -- (4000,0.);
        \addplot  [line width=3pt, solid, blue]  table[x index=0, y index =1]{Q1DVNH31.dat};
        \addplot  [line width=3pt, loosely dotted, red]   table[x index=0, y index =1]{Q1DVNH32.dat};
        \addplot [line width=3pt, densely dash dot, yellow] table[x index=0,y index =1]{Q1DVNH33.dat};
        \addlegendentry{ROBOBOA-D, AG}
        \addlegendentry{ROBOBOA-DF}
        \addlegendentry{HANSO-FI, AG, $\Ntheta=3$}
\end{axis}
\end{tikzpicture}
\end{minipage}
\caption{Optimization performance for Laplace in 1D using the $P_1$ discretization with two sweeps of Jacobi relaxation \cref{eq:E2Laplace} measured using LFA-predicted $\rho_{\Psi_*}$ (left) and measured $\rho_{m,1}$ (right).}\label{Odd-compare-LP2-NH4}
\end{figure}

Next, we consider the two-grid method for the $P_2$ approximation for the Laplace problem in 2D with a single weighted Jacobi relaxation, with $\nu_1+\nu_2=1$. In 2D, there are four harmonics:
$(\theta_1,\theta_2)\in T^{\rm low}$, $(\theta_1 + \pi,\theta_2)$, $(\theta_1,\theta_2+\pi)$, and $(\theta_1+\pi,\theta_2+\pi)$.  For $P_2$ elements, we have 4 types of basis functions, leading to a $4\times 4$ matrix symbol for each harmonic.  Thus, $\widetilde{E}$ is a $16\times 16$ matrix; for more details on computing $\widetilde{E}$, we refer to \cite{StokePatchFHM}.
For this example, we do not know the analytical solution;
brute-force discretized search with
$N_p=50$ points in parameter $p_1\in[0,1]$ and $\Ntheta=33$ points in each component of $\thetab$, we find $\rho_{\Psi_*}\approx 0.596$ with parameter $p_1\approx 0.82$ using 54,450 function evaluations.

  Using central differences for the derivative approximation, the left of \Cref{odd-compare-LP3} shows the performance  of the three optimization approaches for this problem, using a central-difference stepsize $t=10^{-8}$. All three approaches are successful, yielding something comparable to the brute-force discretized value within 500--600 function evaluations. Because we can sample coarsely in $\thetab$, \HANSO\ (with $\Ntheta=3$) is most efficient in this setting. Comparing the left and right of \Cref{odd-compare-LP3}, we see that the  measured convergence factor is slightly better than the discretized $\rho_{\Psi_*}$ prediction of the three methods; this is reasonable, since LFA offers a sharp
prediction.

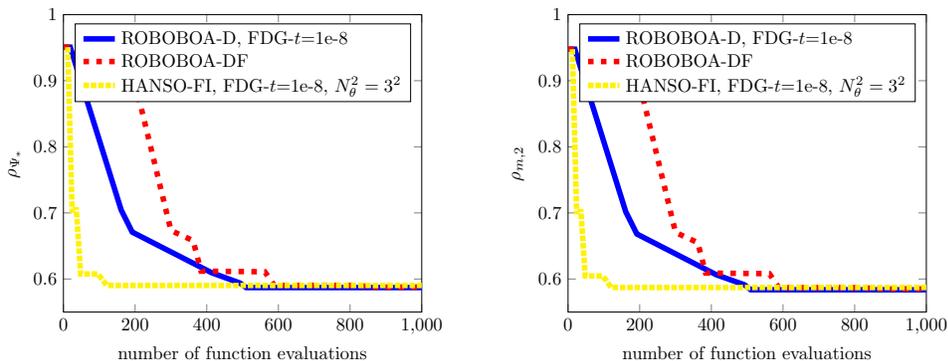
\begin{figure}
\centering
\begin{minipage}[b]{0.485\textwidth}
\centering
\begin{tikzpicture}[baseline, scale=0.695]
    \begin{axis}[xlabel = {number of function evaluations},
                         ylabel = {$\rho_{\Psi_*}$},
                         legend pos = north east,
                        ymin=0.55, ymax=1,
                        xmin =0, xmax =1000,
                        legend cell align={left}
                             ]
        \addplot  [line width=3pt, solid, blue]  table[x index=0, y index =1]{P2NH3-81.dat};
        \addplot  [line width=3pt, loosely dotted, red]   table[x index=0, y index =1]{P2NH3-82.dat};
        \addplot [line width=3pt, densely dash dot, yellow] table[x index=0,y index =1]{P2NH3-83.dat};

        \addlegendentry{ROBOBOA-D, FDG-$t$=1e-8}
        \addlegendentry{ROBOBOA-DF}
        \addlegendentry{HANSO-FI, FDG-$t$=1e-8, $\Ntheta^2=3^2$}
\end{axis}
\end{tikzpicture}
\end{minipage}
\hfill
\begin{minipage}[b]{0.485\textwidth}
\centering
\begin{tikzpicture}[baseline, scale=0.695]
    \begin{axis}[xlabel = {number of function evaluations},
                         ylabel = {$\rho_{m,2}$},
                         legend pos = north east,
                        ymin=0.55, ymax=1,
                        xmin =0, xmax =1000,
                        legend cell align={left}
                             ]
        \addplot  [line width=3pt, solid, blue]  table[x index=0, y index =1]{P2VNH3-81.dat};
        \addplot  [line width=3pt, loosely dotted, red]   table[x index=0, y index =1]{P2VNH3-82.dat};
        \addplot [line width=3pt, densely dash dot, yellow] table[x index=0,y index =1]{P2VNH3-83.dat};
        \addlegendentry{ROBOBOA-D, FDG-$t$=1e-8}
        \addlegendentry{ROBOBOA-DF}
        \addlegendentry{HANSO-FI, FDG-$t$=1e-8, $\Ntheta^2=3^2$}
\end{axis}
\end{tikzpicture}
\end{minipage}
\caption{Optimization performance for $P_2$ discretization of the Laplace equation in 2D using central-difference derivatives with stepsize $t=10^{-8}$
using LFA-predicted $\rho_{\Psi_*}$ (left) and measured $\rho_{m,2}$ (right).}\label{odd-compare-LP3}
\end{figure}

\begin{table}
 \caption{Two-grid LFA-predicted ($\rho_{\Psi_*}$)  and measured ($\rho_{m,1}$ and $\rho_{m,2}$) convergence factors for the $P_2$
   approximation of the Laplacian at $p_1=0.82$.}
\centering
\begin{tabular}{|l||c|c|c|c|}
\hline
  & $TG(0,1)$   &$TG(1,1)$   & $TG(1,2)$  &$TG(2,2)$     \\
\hline \hline
$\rho_{\Psi_*}$  &0.596     &0.516   &0.391     &0.312   \\
\hline
$\rho_{m,1}$      &0.577    &0.496    &0.360     &0.300   \\
\hline
$\rho_{m,2}$      &0.593    &0.507    &0.370     &0.309   \\
\hline
\end{tabular}\label{odd-P2-Laplace-LFA-vs-MG}
\end{table}

As a validation test, we report the LFA predictions and the measured convergence factors using $p_1=0.82$  for different numbers of pre- and post- smoothing $(\nu_1,\nu_2)$ of the two-grid method ($TG(\nu_1,\nu_2)$) in \Cref{odd-P2-Laplace-LFA-vs-MG}. Consistent with the behavior seen in \Cref{odd-compare-LP3}, in \Cref{odd-P2-Laplace-LFA-vs-MG} we see a very small
difference between the predicted and measured convergence factors, providing further confidence in our prediction $\rho_{\Psi_*}$. Moreover, the LFA-based $\rho_{\Psi_*}$ offers a good prediction of both $\rho_{m,1}$ and $\rho_{m,2}$.
Consequently, in the following examples
we measure performance using only $\rho_{\Psi_*}$.

\revise{
  As a final example of applying these optimization algorithms to a scalar differential equation, we return to the 1D $P_1$ discretization of the Laplacian, but now consider coarsening-by-three multigrid with piecewise constant interpolation operator with stencil representation
\begin{equation*}
P_h=
\left ]
  \begin{tabular}{ccc}
  1 \\
  1  \\
  1
  \end{tabular}
\right [_{3h}^h,
\end{equation*}
$R_h=P^T$, and $L_H = R_hL_hP_h$.  We use a single sweep of pre- and post- Jacobi relaxation with weight $p_1$ and, to attempt to ameliorate the choice of interpolation operator, introduce a second weighting parameter, $p_2$, to under- or over-damp the coarse-grid correction process. The resulting two-grid error propagation operator is
\begin{equation*}
 E =(I -p_1 M_h^{-1}L_h)(I - p_2P_hL_H^{-1}R_hL_h)(I -p_1 M_h^{-1}L_h).
\end{equation*}
We omit the computation of the symbols in this case, but note that similar computations can be found, for example, in \cite{gaspar2009geometric}. When coarsening by threes, the low frequency range becomes $T^{\rm low}=[-\frac{\pi}{3},\frac{\pi}{3})$, and there are 3 harmonic frequencies for each low-frequency mode; thus, the symbol of $E$ is a $3\times 3$ matrix.}

\revise{
For this example, an analytical solution for the optimal parameters is not known; brute-force discretized search with
$N_p=126^2$ points for parameters $\pb\in[0,2.5]^2$ and $\Ntheta=33$ points for $\thetab$ yields $\rho_{\Psi_*}\approx 0.421$ with parameter $(p_1, p_2) = (0.72, 2.30)$.  This has been confirmed using measured convergence factors for both periodic and Dirichlet boundary conditions.
Using central differences for the derivative approximation, \Cref{odd-compare-LP1-coarsen3} shows the performance  of the three optimization approaches for this problem, using a central-difference stepsize $t=10^{-8}$. All three approaches are successful, yielding something comparable to the brute-force discretized value within 500 function evaluations, with no further improvement observed, even when increasing the computational budget to 1500 function evaluations.  For this example, we note that ROBOBOA-DF gets closest to the value found by the brute-force search, while ROBOBOA-D is slightly worse (0.442 vs. 0.429).  While HANSO-FI with $N_\theta = 3$ appears to be equally effective to ROBOBOA based on the small sampling space, the parameter values found are slightly suboptimal when higher-resolution sampling in $\theta$ is considered, giving a convergence factor of 0.461.}

\begin{figure}
\centering
\begin{minipage}[b]{0.485\textwidth}
\centering
\begin{tikzpicture}[baseline, scale=0.695]
    \begin{axis}[xlabel = {number of function evaluations},
                         ylabel = {$\rho_{\Psi_*}$},
                         legend pos = north east,
                        ymin=0.40, ymax=1,
                        xmin =0, xmax =800,
                        legend cell align={left}
                             ]
        \addplot  [line width=3pt, solid, blue]  table[x index=0, y index =1]{Q1C3NH3G1.dat};
        \addplot  [line width=3pt, loosely dotted, red]   table[x index=0, y index =1]{Q1C3NH3G2.dat};
        \addplot [line width=3pt, densely dash dot, yellow] table[x index=0,y index =1]{Q1C3NH3G3.dat};

        \addlegendentry{ROBOBOA-D, FDG-$t$=1e-8}
        \addlegendentry{ROBOBOA-DF}
        \addlegendentry{HANSO-FI, FDG-$t$=1e-8, $\Ntheta=3$}
\end{axis}
\end{tikzpicture}
\end{minipage}
\caption{Optimization performance for $P_1$ discretization of the Laplace equation in 1D  with coarsening-by-threes and piecewise constant interpolation. The derivatives are obtained using central differences with  stepsize $10^{-8}$.}\label{odd-compare-LP1-coarsen3}
\end{figure}
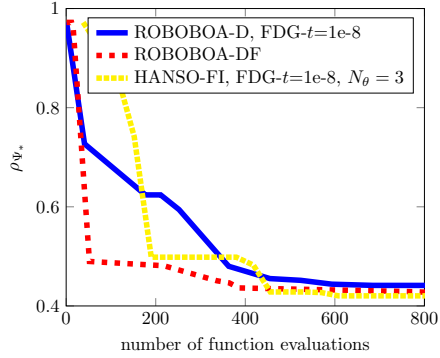

\subsection{Stokes Equations}\label{sec:stokes}
LFA has been applied to both finite-difference and finite-element methods for the Stokes equations, with several different relaxation schemes \cite{StokePatchFHM,NLA2147,luo2018monolithic,HMFEMStokes,SPMacLachlan_CWOosterlee_2011a,drzisga2018analysis,gaspar2014simple,niestegge1990analysis,sivaloganathan1991use}.  Here, we present a variety of examples for the choice of relaxation within the two-grid method using a single pre-relaxation (i.e., $\nu_1=1$ and $\nu_2=0$).  While we do not review the details of LFA for these schemes (see the references above), we briefly review the discretization and relaxation schemes.

 We consider the Stokes equations,
\begin{equation}
\label{eq:Stokes-P2P1}
\begin{array}{r}
  -\Delta\vec{u}+\nabla \mathfrak{p} = \vec{f}\\
  -\nabla\cdot \vec{u}=0,
\end{array}
\end{equation}
where $\vec{u}$ is the velocity vector, $\mathfrak{p}$ is the  scalar pressure of a viscous fluid, and $\vec{f}$ represents a known forcing term, together with suitable boundary conditions.  Discretizations of \cref{eq:Stokes-P2P1} typically lead to  linear systems of the form
\begin{equation}\label{saddle-structure-P2P1}
     L_hy=\begin{pmatrix}
      A & B^{T}\\
     B & - C\\
    \end{pmatrix}
        \begin{pmatrix} \mathcal{U }\\ {\rm p}\end{pmatrix}
  =\begin{pmatrix} {\rm f} \\ 0 \end{pmatrix}=b,
 \end{equation}
where $A$ corresponds to the discretized vector Laplacian and $B$ is the negative of the discrete divergence operator.
If the discretization is naturally unstable, such as for the $Q_1-Q_1$ discretization, then $C\neq 0$ is the stabilization matrix; otherwise $C=0$, such as  for the stable $P_2-P_1$ and $Q_2-Q_1$ (Taylor-Hood elements) finite-element discretizations \cite{elman2006finite}.

We consider three discretizations. First, we consider the stable staggered finite-difference MAC scheme, with edge-based velocity degrees of freedom and cell-centered pressures. Second, we consider the unstable $Q_1-Q_1$ finite-element discretization on quadrilateral meshes, taking all degrees of freedom in the system to be collocated at the nodes of the mesh, stabilized by taking $C$ to be $h^2$ times the $Q_1$ discretization of the Laplacian. Third, we consider the stable $P_2-P_1$ finite-element discretization on triangular meshes, with $C=0$.

\subsubsection{Finite-Difference Discretizations}\label{sec:fdd}

We first consider Braess-Sarazin relaxation based on solution \revise{of the simplified linear system with damping parameter $p_2$}
\begin{equation}\label{Precondtion}
   M_I\delta x=  \begin{pmatrix}
      p_2 D & B^{T}\\
     B & 0\\
    \end{pmatrix}
    \begin{pmatrix} \delta \mathcal{U} \\ \delta {\rm p}\end{pmatrix}
  =\begin{pmatrix} r_{\mathcal{U}} \\ r_{\rm p}\end{pmatrix},
\end{equation}
where $D={\rm diag}(A)$.
 Solutions of \cref{Precondtion} are computed in two stages as
\begin{equation}\label{solution-of-precondtion}
\begin{array}{rcl}
  (BD^{-1}B^{T})\delta {\rm p}&=&BD^{-1}r_{\mathcal{U}}-p_2 r_{\rm p}, \\
  \delta \mathcal{U}&=&\frac{1}{p_2}D^{-1}(r_{\mathcal{U}}-B^{T}\delta {\rm p}).
\end{array}
\end{equation}
In practice, \cref{solution-of-precondtion} is not solved exactly; an approximate solve is sufficient, such as using a simple sweep of a Gauss-Seidel or weighted Jacobi iteration in place of inverting $BD^{-1}B^T$. For the inexact Braess-Sarazin relaxation scheme, we consider a single sweep of  weighted Jacobi relaxation on $BD^{-1}B^T$ with weight $p_3$ and an outer damping parameter, $p_1$, for the whole relaxation scheme (i.e., $\mathcal{S}_h=I -p_1 M_I^{-1}L_h$). This gives a three-dimensional parameter space, $\pb=(p_1,p_2,p_3)$, for the optimization.

Since each harmonic has a three-dimensional symbol, the resulting two-grid LFA representation $\widetilde{E}_I(\pb,\thetab)$ is a $12\times 12$ system. In \cite{NLA2147},  the solution to the minimax problem is shown to be
\begin{equation}\label{eq:opt-MAC-IBSR}
\rho_{\rm opt} = \min_{\pb \in \Rp^3}\max_{\thetab \in[-\frac{\pi}{2},\frac{\pi}{2}]^2}\left\{\rho \left(\widetilde{E}_{I}\left((p_1,p_2,p_3), \thetab \right)\right)\right\}
  =\frac{3}{5},
\end{equation}
with $(p_1, p_2, p_3)=(1,\frac{5}{4},\frac{4}{5})$, although this parameter choice may not be unique.

Here, we explore the influence of the accuracy of the derivatives on the performance of \HANSO\ and \ROBOBOA\ with derivatives. Analytical derivatives are used in the left of \Cref{odd-compare-IBSR-MAC}, which shows that \ROBOBOA\ with derivatives obtains a near-optimal value of $\rho_{\Psi_*}$ in approximately \revise{half} as many evaluations as \ROBOBOA\ without derivatives. Central-difference derivatives (with $t= 10^{-12}$) are used in the right of \Cref{odd-compare-IBSR-MAC}, which shows that the performance (in terms of function evaluations required) of both \ROBOBOA\ and \HANSO\ suffer.

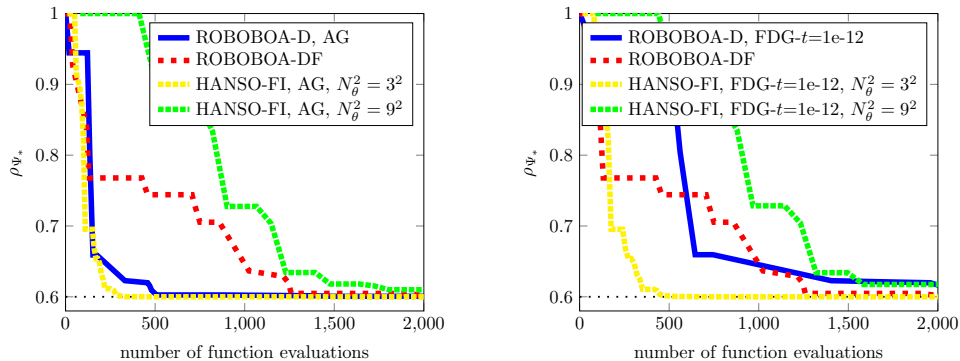
\begin{figure}
\centering
\begin{minipage}[b]{0.475\textwidth}
\centering
\begin{tikzpicture}[baseline, scale=0.695]
    \begin{axis}[xlabel = {number of function evaluations},
                         ylabel = {$\rho_{\Psi_*}$},
                         legend pos = north east,
                        ymin=0.58, ymax=1,
                        xmin =0, xmax =2000,
                        legend cell align={left}
                             ]
	\draw[line width=1pt, loosely dotted, black] (0,0.6) -- (4000,0.6);
        \addplot  [line width=3pt, solid, blue]  table[x index=0, y index =1]{IBSRNH31.dat};
        \addplot  [line width=3pt, loosely dotted, red]   table[x index=0, y index =1]{IBSRNH32.dat};
        \addplot [line width=3pt, densely dash dot, yellow] table[x index=0,y index =1]{IBSRNH33.dat};
        \addplot [line width=3pt, densely dash dot, green] table[x index=0,y index =1]{IBSRNH93.dat};
        \addlegendentry{ROBOBOA-D, AG}
        \addlegendentry{ROBOBOA-DF}
        \addlegendentry{HANSO-FI, AG, $\Ntheta^2=3^2$}
        \addlegendentry{HANSO-FI, AG, $\Ntheta^2=9^2$}
\end{axis}
\end{tikzpicture}
\end{minipage}
\hfill
\begin{minipage}[b]{0.475\textwidth}
\centering
\begin{tikzpicture}[baseline, scale=0.695]
    \begin{axis}[xlabel = {number of function evaluations},
                         ylabel = {$\rho_{\Psi_*}$},
                         legend pos = north east,
                        ymin=0.58, ymax=1,
                        xmin =0, xmax =2000,
                        legend cell align={left}
                             ]
	\draw[line width=1pt, loosely dotted, black] (0,0.6) -- (4000,0.6);
        \addplot  [line width=3pt, solid, blue]  table[x index=0, y index =1]{IBSRNH3-121.dat};
        \addplot  [line width=3pt, loosely dotted, red]   table[x index=0, y index =1]{IBSRNH3-122.dat};
        \addplot [line width=3pt, densely dash dot, yellow] table[x index=0,y index =1]{IBSRNH3-123.dat};
        \addplot [line width=3pt, densely dash dot, green] table[x index=0,y index =1]{IBSRNH9-123.dat};
        \addlegendentry{ROBOBOA-D, FDG-$t$=1e-12}
        \addlegendentry{ROBOBOA-DF}
        \addlegendentry{HANSO-FI, FDG-$t$=1e-12, $\Ntheta^2=3^2$}
        \addlegendentry{HANSO-FI, FDG-$t$=1e-12, $\Ntheta^2=9^2$}
\end{axis}
\end{tikzpicture}
\end{minipage}
\caption{Optimization results for the MAC scheme discretization with inexact Braess-Sarazin relaxation \cref{Precondtion}. The derivatives are obtained analytically (left) or using central differences with  stepsize $10^{-12}$ (right). } \label{odd-compare-IBSR-MAC}
\end{figure}

The Uzawa relaxation scheme can be viewed as a block triangular approximation of Braess-Sarazin, solving
\begin{equation}\label{Precondtion-Uzawa}
   M_U \delta x=  \begin{pmatrix}
      p_2 D & 0\\
     B & p_3^{-1} I\\
    \end{pmatrix}
    \begin{pmatrix} \delta \mathcal{U} \\ \delta {\rm p}\end{pmatrix}
  =\begin{pmatrix} r_{\mathcal{U}} \\ r_{\rm p}\end{pmatrix}.
\end{equation}
We again consider a three-dimensional parameter space, $(p_1,p_2,p_3)$, with outer damping parameter $p_1$.

From \cite{NLA2147}, we know that the optimal LFA two-grid convergence factor is
\begin{equation*}
\rho_{\rm opt} = \min_{\pb \in \Rp^3}\max_{\thetab \in[-\frac{\pi}{2},\frac{\pi}{2}]^2}\Big\{\rho\big(\widetilde{E}_{U}((p_1,p_2,p_3), \thetab)\big)\Big\}
  =\sqrt{\frac{3}{5}}\approx 0.775,
\end{equation*}
with  multiple solutions of the minimax problem, including  $(p_1,p_2,p_3)=(1,\frac{5}{4},\frac{1}{4})$.

\Cref{Odd-compare-Uzawa-MAC} again compares the use of analytical calculations and central-difference approximations for the derivatives.  In both cases, \HANSO\ (with the coarse sampling $\Ntheta=3$) quickly approaches the optimal convergence factor;
\ROBOBOA\ without derivatives finds $\rho_{\Psi_*} \leq 0.775$ in roughly 800 function evaluations. When using analytical derivatives, \ROBOBOA\ with derivatives performs similarly to without derivatives but is less efficient in the central-difference case.
This provides an example where adaptive sampling with less accurate derivatives can require additional evaluations.
While this is almost five times slower than \ROBOBOA\ without derivatives, it still represents a great improvement on even a coarsely discretized brute-force approach, which might require $10^5$ (or more) function evaluations to sample on a mesh in three parameter directions and two Fourier frequencies.

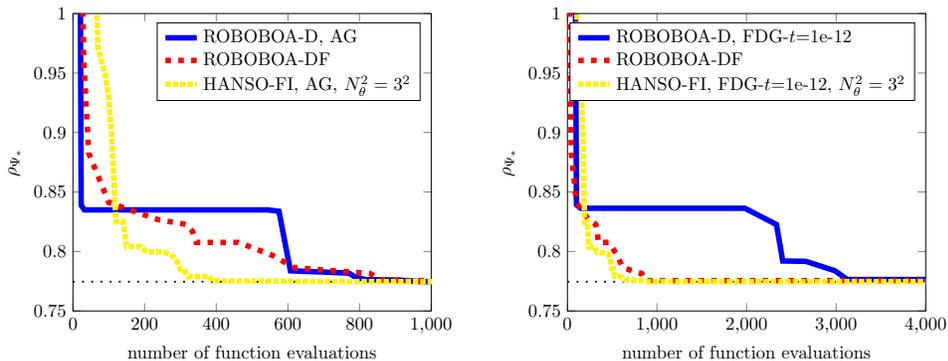
\begin{figure}
\centering
\begin{minipage}[b]{0.495\textwidth}
\centering
\begin{tikzpicture}[baseline, scale=0.695]
    \begin{axis}[xlabel = {number of function evaluations},
                         ylabel = {$\rho_{\Psi_*}$},
                         legend pos = north east,
                        ymin=0.75, ymax=1,
                        xmin =0, xmax =1000,
                        legend cell align={left}
                             ]
	\draw[line width=1pt, loosely dotted, black] (0,0.7746) -- (4000,0.7746);
        \addplot  [line width=3pt, solid, blue]  table[x index=0, y index =1]{MAC-UzawaNH31.dat};
        \addplot  [line width=3pt, loosely dotted, red]   table[x index=0, y index =1]{MAC-UzawaNH32.dat};
        \addplot [line width=3pt, densely dash dot, yellow] table[x index=0,y index =1]{MAC-UzawaNH33.dat};
        \addlegendentry{ROBOBOA-D, AG}
        \addlegendentry{ROBOBOA-DF}
        \addlegendentry{HANSO-FI, AG, $\Ntheta^2=3^2$}
\end{axis}
\end{tikzpicture}
\end{minipage}
\hfill
\begin{minipage}[b]{0.495\textwidth}
\centering
\begin{tikzpicture}[baseline, scale=0.695]
    \begin{axis}[xlabel = {number of function evaluations},
                         ylabel = {$\rho_{\Psi_*}$},
                         legend pos = north east,
                        ymin=0.75, ymax=1,
                        xmin =0, xmax =4000,
                        legend cell align={left}
                             ]
	\draw[line width=1pt, loosely dotted, black] (0,0.7746) -- (4000,0.7746);
        \addplot  [line width=3pt, solid, blue]  table[x index=0, y index =1]{Uzawa4000NH3-121.dat};
        \addplot  [line width=3pt, loosely dotted, red]   table[x index=0, y index =1]{Uzawa4000NH3-122.dat};
        \addplot [line width=3pt, densely dash dot, yellow] table[x index=0,y index =1]{Uzawa4000NH3-123.dat};
        \addlegendentry{ROBOBOA-D, FDG-$t$=1e-12}
        \addlegendentry{ROBOBOA-DF}
        \addlegendentry{HANSO-FI, FDG-$t$=1e-12, $\Ntheta^2=3^2$}
\end{axis}
\end{tikzpicture}
\end{minipage}
\caption{Optimization performance for MAC finite-difference discretization using Uzawa relaxation \cref{Precondtion-Uzawa}. Left: results using analytical derivatives; right: results using central differences with stepsize  $10^{-12}$.
Note differences in horizontal axis limits.} \label{Odd-compare-Uzawa-MAC}
\end{figure}

\subsubsection{Stabilized $Q_1-Q_1$ Discretization}\label{subsec:Q1-Q1-Stokes}
We now consider the distributive weighted-Jacobi (DWJ) relaxation analyzed in \cite{HMFEMStokes}.  The idea of distributive relaxation is to replace  relaxation on the equation $Ly = b$ by introducing a new variable, $\hat{y}$ with $y= F\hat{y}$,
and considering relaxation on the transformed system $Ly = LF\hat{y} = b$.
Here, $F$ is chosen such that the resulting operator $LF$ is suitable for decoupled relaxation with a
pointwise relaxation process.  For Stokes, it is common to take
\begin{equation*}\label{P-Precondtion}
   F=  \begin{pmatrix}
      I & B^{T}\\
      0 & -A_p\\
    \end{pmatrix},
\end{equation*}
where $A_p$ is the Laplacian operator discretized at the pressure points.
 Here, we use
\begin{equation}\label{DWJ-Precondition-2Sweeps}
   M_{{\rm DWJ}}\delta x=  \begin{pmatrix}
      p_2 D & 0\\
      B & G\\
    \end{pmatrix}
    \begin{pmatrix} \delta \mathcal{{U}} \\ \delta {\rm{p}}\end{pmatrix}
  =\begin{pmatrix} r_{\mathcal{U}} \\ r_{\rm p}\end{pmatrix},
\end{equation}
where $G$ stands for applying either a scaling operation (equivalent to one sweep of weighted Jacobi relaxation) or two sweeps of weighted-Jacobi relaxation with equal weights, $p_3$, on the pressure equation in \cref{solution-of-precondtion}. An outer damping parameter, $p_1$, is used for the relaxation scheme so that $\mathcal{S}_h = I -p_1 F M_{\rm DWJ}^{-1}L_h$. This gives a three-dimensional parameter space
and $3\times 3$ symbols for each harmonic frequency, leading to a  $12\times 12$ symbol for the two-grid error propagation operator.

As found in \cite{HMFEMStokes}, the optimal convergence factor for a single sweep of relaxation on the pressure \revise{when using a rediscretization coarse-grid operator} is
$  \rho_{\rm opt} = \frac{55}{89}\approx 0.618,$
which is achieved if and only if
\begin{equation*}\label{beta-DWJ-Parameter-domain}
   \frac{p_1}{p_3}=\frac{459}{356},\,\,\frac{136}{267}\leq\frac{p_1}{p_2}\leq\frac{96}{89}.
\end{equation*}
\Cref{odd-compare-DWJ-FEM} shows the performance of the methods, using central-difference approximations of the derivatives for two different values of $t$. We see that the \HANSO\ variants and \ROBOBOA\ without derivatives are most effective, obtaining rapid decrease in roughly 1,000 function evaluations.
For both stepsizes $t$, \ROBOBOA\ with central differences suffers
(failing to
attain a similar
value within 10,000 function evaluations).

\revise{
  \begin{remark}
    In contrast to the results above, when a Galerkin coarse-grid operator is used, the corresponding two-grid LFA convergence factor degrades to $0.906$.  This is not particularly surprising, since the stabilization term depends on the meshsize, $h$, and the Galerkin coarse-grid operator does not properly ``rescale'' this value.  In this setting, we can again add a weighting factor to the coarse-grid correction process to attempt to recover improved performance.  By brute-force search, we find the optimal parameter is a damping factor of $0.70$, which recovers the two-grid convergence factor from the rediscretization case.  A more systematic exploration of the potential use of such factors in monolithic methods for the Stokes equations (and other coupled systems) is left for future work.
\end{remark}
}

\begin{figure}
\begin{minipage}{.48\textwidth}
  \centering
\begin{tikzpicture}[baseline, scale=0.73]
    \begin{axis}[xlabel = {number of function evaluations},
                         ylabel = {$\rho_{\Psi_*}$},
                         legend pos = north east,
                        ymin=0.6, ymax=1,
                        xmin =0, xmax =4000,
                        legend cell align={left}
                             ]
	\draw[line width=1pt, loosely dotted, black] (0,0.618) -- (4000,0.618);
        \addplot  [line width=3pt, densely dash dot, purple]  table[x index=0, y index =1]{DWJNH3-121.dat};
        \addplot  [line width=3pt, solid, blue]  table[x index=0, y index =1]{DWJNH3-61.dat};
        \addplot  [line width=3pt, loosely dotted, red]   table[x index=0, y index =1]{DWJNH3-62.dat};
        \addplot [line width=3pt, solid, green] table[x index=0,y index =1]{DWJNH3-123.dat};
        \addplot [line width=3pt, densely dash dot, yellow] table[x index=0,y index =1]{DWJNH3-63.dat};
        \addlegendentry{ROBOBOA-D, FDG-$t$=1e-12}
        \addlegendentry{ROBOBOA-D, FDG-$t$=1e-6}
        \addlegendentry{ROBOBOA-DF}
        \addlegendentry{HANSO-FI, FDG-$t$=1e-12, $\Ntheta^2=3^2$}
        \addlegendentry{HANSO-FI, FDG-$t$=1e-6, $\Ntheta^2=3^2$}
\end{axis}
\end{tikzpicture}
  \captionof{figure}{Optimization performance for the stabilized $Q_1-Q_1$ discretization of the Stokes equations, using distributive weighted Jacobi relaxation \cref{DWJ-Precondition-2Sweeps} with a single sweep of relaxation on the transformed pressure equation.
}
\label{odd-compare-DWJ-FEM}
\end{minipage}%
\hfill
\begin{minipage}{.48\textwidth}
  \centering
\begin{tikzpicture}[baseline, scale=0.73]
    \begin{axis}[xlabel = {number of function evaluations},
                         ylabel = {$\rho_{\Psi_*}$},
                         legend pos = north east,
                        ymin=0.31, ymax=1,
                        xmin =0, xmax =4000,
                        legend cell align={left}
                             ]
	\draw[line width=1pt, loosely dotted, black] (0,0.333) -- (4000,0.333);
        \addplot  [line width=3pt, solid, blue]  table[x index=0, y index =1]{DWJ2NH3-61.dat};
        \addplot  [line width=3pt, loosely dotted, red]   table[x index=0, y index =1]{DWJ2NH3-62.dat};
        \addplot [line width=3pt, densely dash dot, yellow] table[x index=0,y index =1]{DWJ2NH3-63.dat};
         \addplot [line width=3pt, densely dash dot, green] table[x index=0,y index =1]{DWJ2NH8-63.dat};
        \addplot [line width=3pt, densely dash dot, orange] table[x index=0,y index =1]{DWJ2NH9-63.dat};
        \addlegendentry{ROBOBOA-D, FDG-$t$=1e-6}
        \addlegendentry{ROBOBOA-DF}
        \addlegendentry{HANSO-FI, FDG-$t$=1e-6, $\Ntheta^2=3^2$}
        \addlegendentry{HANSO-FI, FDG-$t$=1e-6, $\Ntheta^2=8^2$}
        \addlegendentry{HANSO-FI, FDG-$t$=1e-6, $\Ntheta^2=9^2$}
\end{axis}
\end{tikzpicture}
  \captionof{figure}{Optimization performance for the stabilized $Q_1-Q_1$ discretization of the Stokes equations, using distributive weighted Jacobi relaxation \cref{DWJ-Precondition-2Sweeps} with two sweeps of relaxation on the transformed pressure equation.
} \label{odd-compare-DWJ2}
\end{minipage}%
\end{figure}

As shown in  \cite{HMFEMStokes}, using two sweeps of weighted Jacobi on the transformed pressure equation greatly improves the performance of the DWJ relaxation, yielding $\rho_{\rm opt}=\frac{1}{3}$ with many solutions, including $(p_1, p_2,p_3)=(\frac{4}{3},  \frac{3}{2}, 1)$, achieving this value.
In \Cref{odd-compare-DWJ2}, we use central differences to evaluate the derivatives; shown are the results with $t=10^{-6}$, the test stepsize for which both methods performed best.
In contrast to the single-sweep case, \ROBOBOA\ with derivatives performs best and outperforms \ROBOBOA\ without derivatives.
This is also a case where sampling at $\thetab=0$ is not vital;
although the initial decrease obtained by \HANSO\ is as expected and ordered in terms of the sampling rate $\Ntheta$, only the $\Ntheta^2=8^2$ variant is close to approaching the $\rho_{\rm opt}$ value. The $\Ntheta^2=3^2$ and $\Ntheta^2=9^2$ variants of \HANSO\ converge (up to 10,000 function evaluations were tested) to parameter values with $\rho_{\Psi_*} \approx 0.4$.
We note that a brute-force discretized search with $N_p=20$ and $\Ntheta=33$ costs $20^3\cdot 33^2\approx 8.7\times 10^{6}$ function evaluations, considerably more expensive than any of these approaches.

\subsubsection{Additive Vanka  Relaxation for Stokes}\label{sec:Vanka-case}
We next consider the stable $P_2-P_1$ approximation for the Stokes equations with  an additive Vanka relaxation scheme \cite{StokePatchFHM}. \revise{
  Vanka relaxation, a form of overlapping Schwarz iteration, is well known as an effective relaxation scheme for the Stokes equations when used in its multiplicative form \cite{SPMacLachlan_CWOosterlee_2011a,vanka1986block}.  However, as is typical, the multiplicative scheme is less readily parallelized than its additive counterpart.  This has driven recent interest in additive Vanka relaxation schemes.  The additive form of Vanka relaxation, however, is more sensitive to the choice of relaxation parameters, which is the subject of \cite{StokePatchFHM} and the problem tackled here.  This sensitivity makes the problem attractive as a test case for the methods proposed here.}

We first consider the case of $\nu_1+\nu_2=2$ sweeps of additive Vanka-inclusive relaxation that uses $n=3$ relaxation weights, separately weighting the corrections to nodal and edge velocity degrees  of freedom and  the (nodal) pressure degrees of freedom.  Because of the structure of the $P_2-P_1$ discretization, the symbol of each harmonic frequency is a $9\times 9$ matrix; consequently, that of the two-grid operator is a $36\times 36$ matrix.
A brute-force discretized search using $N_p=15$ points for each parameter and $\Ntheta=33$ points for each frequency finds parameters that yield
 $\rho_{\Psi_*} \approx 0.455$ using 3,675,375  function evaluations.   \Cref{odd-compare-VK3} (left) shows
that
\ROBOBOA\ without derivatives attains this value in roughly 3,400 function evaluations.
For the \ROBOBOA\ and \HANSO\ variants with approximate derivatives, rapid decrease is seen, but the values found have $\rho_{\Psi_*}\in [0.45,0.5]$, independent of the central-difference stepsize.

An even more challenging problem is shown in \Cref{odd-compare-VK3} (right), where we optimize $n=5$ relaxation parameters by including separate relaxation parameters for each type of $P_2$ degree of freedom in the velocity discretization. Here, a brute-force discretized search is out of scope because of the high dimensionality of the problem.
Only through advanced optimization can one analyze the benefit of the additional parameters.
In this case, the multigrid performance is not improved by increasing the degrees of freedom from $n=3$ to $n=5$.

\begin{figure}
\centering
\begin{minipage}[b]{0.485\textwidth}
\centering
\begin{tikzpicture}[baseline, scale=0.695]
    \begin{axis}[xlabel = {number of function evaluations},
                         ylabel = {$\rho_{\Psi_*}$},
                         legend pos = north east,
                        ymin=0.4, ymax=0.9,
                        xmin =0, xmax =5000,
                        legend cell align={left}
                             ]
        \addplot  [line width=3pt, densely dash dot, purple]  table[x index=0, y index =1]{VK3NH3-121.dat};
        \addplot  [line width=3pt, solid, blue]  table[x index=0, y index =1]{VK3NH3-61.dat};
        \addplot  [line width=3pt, loosely dotted, red]   table[x index=0, y index =1]{VK3NH3-122.dat};
        \addplot [line width=3pt, solid, green] table[x index=0,y index =1]{VK3NH3-123.dat};
        \addplot [line width=3pt, densely dash dot, yellow] table[x index=0,y index =1]{VK3NH3-63.dat};
        \addlegendentry{ROBOBOA-D, FDG-$t$=1e-12}
        \addlegendentry{ROBOBOA-D, FDG-$t$=1e-6}
        \addlegendentry{ROBOBOA-DF}
        \addlegendentry{HANSO-FI, FDG-$t$=1e-12, $\Ntheta^2=3^2$}
        \addlegendentry{HANSO-FI, FDG-$t$=1e-6, $\Ntheta^2=3^2$}
\end{axis}
\end{tikzpicture}
\end{minipage}
\hfill
\begin{minipage}[b]{0.485\textwidth}
\centering
\begin{tikzpicture}[baseline, scale=0.695]
    \begin{axis}[xlabel = {number of function evaluations},
                         ylabel = {$\rho_{\Psi_*}$},
                         legend pos = north east,
                        ymin=0.4, ymax=0.9,
                        xmin =0, xmax =5000,
                        legend cell align={left}
                             ]
        \addplot  [line width=3pt, densely dash dot, purple]  table[x index=0, y index =1]{VK5NH3-121.dat};
        \addplot  [line width=3pt, solid, blue]  table[x index=0, y index =1]{VK5NH3-61.dat};
        \addplot  [line width=3pt, loosely dotted, red]   table[x index=0, y index =1]{VK5NH3-122.dat};
        \addplot [line width=3pt, solid, green] table[x index=0,y index =1]{VK5NH3-123.dat};
        \addplot [line width=3pt, densely dash dot, yellow] table[x index=0,y index =1]{VK5NH3-63.dat};
        \addlegendentry{ROBOBOA-D, FDG-$t$=1e-12}
        \addlegendentry{ROBOBOA-D, FDG-$t$=1e-6}
        \addlegendentry{ROBOBOA-DF}
        \addlegendentry{HANSO-FI, FDG-$t$=1e-12, $\Ntheta^2=3^2$}
        \addlegendentry{HANSO-FI, FDG-$t$=1e-6, $\Ntheta^2=3^2$}
\end{axis}
\end{tikzpicture}
\end{minipage}
\caption{Optimization results for additive Vanka relaxation applied to the $P_2-P_1$ discretization of the Stokes equations using $n=3$ (left) and $n=5$ (right) parameters. Both cases use central differences with stepsize $10^{-6}$ and $10^{-12}$ to approximate derivatives.}\label{odd-compare-VK3}
\end{figure}
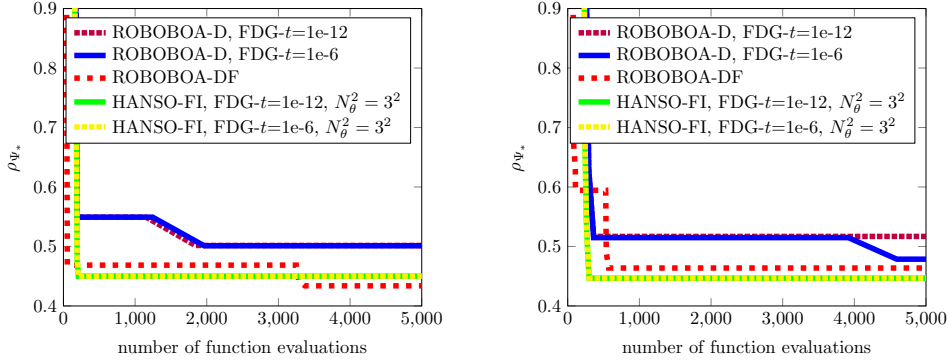

\subsection{Control Problem in 3D}\label{sec:Control3D}
Our final problem is a 3D elliptic optimal control
problem from \cite{kepler2009fourier} that seeks the state $z \in H^{1}(\Omega)$ and control $g\in L^{2}(\Omega)$ that solve
\begin{equation}\label{Control-problem}
  \begin{aligned}
&\displaystyle\min_{(z,g)\in H^{1}(\Omega)\times L^{2}(\Omega)} &
 \frac{1}{2}\|z-y_d\|^2_{L^2(\Omega)}+\frac{\beta}{2}\|g\|^2_{L^2(\Omega)} \\
& \mbox{subject to} &-\Delta z =g \qquad {\rm in} \; \Omega\\
&  &z=0 \qquad {\rm on} \; \partial\Omega,\\
   \end{aligned}
\end{equation}
where $y_d$ is the desired state and $\beta>0$ is the weight of the cost of the control
(or simply a regularization parameter).
We consider discretization using $Q_1$ finite elements, which yields
\begin{eqnarray*}
&&\min_{z_h,g_h}\frac{1}{2}\|z_h-y_d\|^2_{L^2(\Omega)}+\frac{\beta}{2}\|g_h\|^2_{L^2(\Omega)},\label{weak-form1}\\
&&\mbox{subject to: } \int_{\Omega} \nabla z_h\cdot \nabla v_h = \int_{\Omega} g_h v_h, \,\,\forall v_h\in V_0^h. \label{weak-form2}
\end{eqnarray*}
Using a Lagrange multiplier approach to enforce the constraint leads to a linear system of the form
\begin{equation*}\label{saddle-structure}
     \mathcal{A} x=\begin{pmatrix}
      M   & K^{T}   &0 \\
      K   & 0      & -M \\
      0   &-M^{T}  &\beta M
    \end{pmatrix}
        \begin{pmatrix} z_h \\ \lambda_h \\g_h \end{pmatrix}
  =\begin{pmatrix} f_{1} \\f_{2} \\ f_{3} \end{pmatrix}=b,
 \end{equation*}
 where $M$  and $K$ are the mass and stiffness matrices, respectively, of the $Q_1$ discretization of the Laplacian in 3D.
 We consider a block Jacobi relaxation scheme with
 \begin{equation*}\label{precond-A2}
     \hat{\mathcal{A}}=\begin{pmatrix}
      {\rm diag}(M)                &p_2{\rm diag}(K)     &0 \\
 p_2 {\rm diag}(K)  & 0                      &-{\rm diag}(M)\\
    0                  &-{\rm diag}(M^{T})                  & \beta {\rm diag}(M)
    \end{pmatrix}
\end{equation*}
 and a multigrid method that uses an outer damping parameter, $p_1$,  for the relaxation scheme (i.e., $\mathcal{S}_h = I -p_1  \hat{\mathcal{A}}^{-1}\mathcal{A}$). This gives an ($n=2$)-dimensional parameter space and $3\times 3$ symbols for each harmonic frequency, leading to a  $24\times 24$ symbol for the two-grid error propagation operator. \revise{To our knowledge, the optimization of these relaxation parameters has not been considered before; however, standard (if tedious) calculations show that the LFA smoothing factor achieves a minimum value of $\frac{17}{19}$ if and only if $p_1=\frac{16}{19}$ and $p_2\in[\frac{2}{3},4]$, which we have confirmed with numerical experiments.}

The results in \Cref{odd-compare-control-problem}  use central differences with stepsize $10^{-12}$ to approximate the derivatives.
The three tested methods find near-optimal values within 1,300 function evaluations, with the coarse-sampling ($\Ntheta=3$) \HANSO\ showing rapid improvement.

\begin{figure}
\begin{minipage}{.495\textwidth}
  \centering
\begin{tikzpicture}[baseline, scale=0.73]
    \begin{axis}[xlabel = {number of function evaluations},
                         ylabel = {$\rho_{\Psi_*}$},
                         legend pos = north east,
                        ymin=0.89, ymax=0.94,
                        xmin =0, xmax =2000,
                        legend cell align={left}
                             ]
 \draw[line width=1pt, loosely dotted, black] (0,0.895) -- (2000,0.895);
        \addplot  [line width=3pt, solid, blue]  table[x index=0, y index =1]{GCPNH3-121.dat};
        \addplot  [line width=3pt, loosely dotted, red]   table[x index=0, y index =1]{GCPNH3-122.dat};
        \addplot [line width=3pt, densely dash dot, yellow] table[x index=0,y index =1]{GCPNH3-123.dat};
        \addlegendentry{ROBOBOA-D, FDG-$t$=1e-12}
        \addlegendentry{ROBOBOA-DF}
        \addlegendentry{HANSO-FI, FDG-$t$=1e-12, $\Ntheta^3=3^3$}
\end{axis}
\end{tikzpicture}
  \captionof{figure}{Optimization performance for 3D control problem \cref{Control-problem}. \vspace{4pc}
}
\label{odd-compare-control-problem}
\end{minipage}%
\hfill
\begin{minipage}{.48\textwidth}
  \centering
\includegraphics[width=\linewidth]{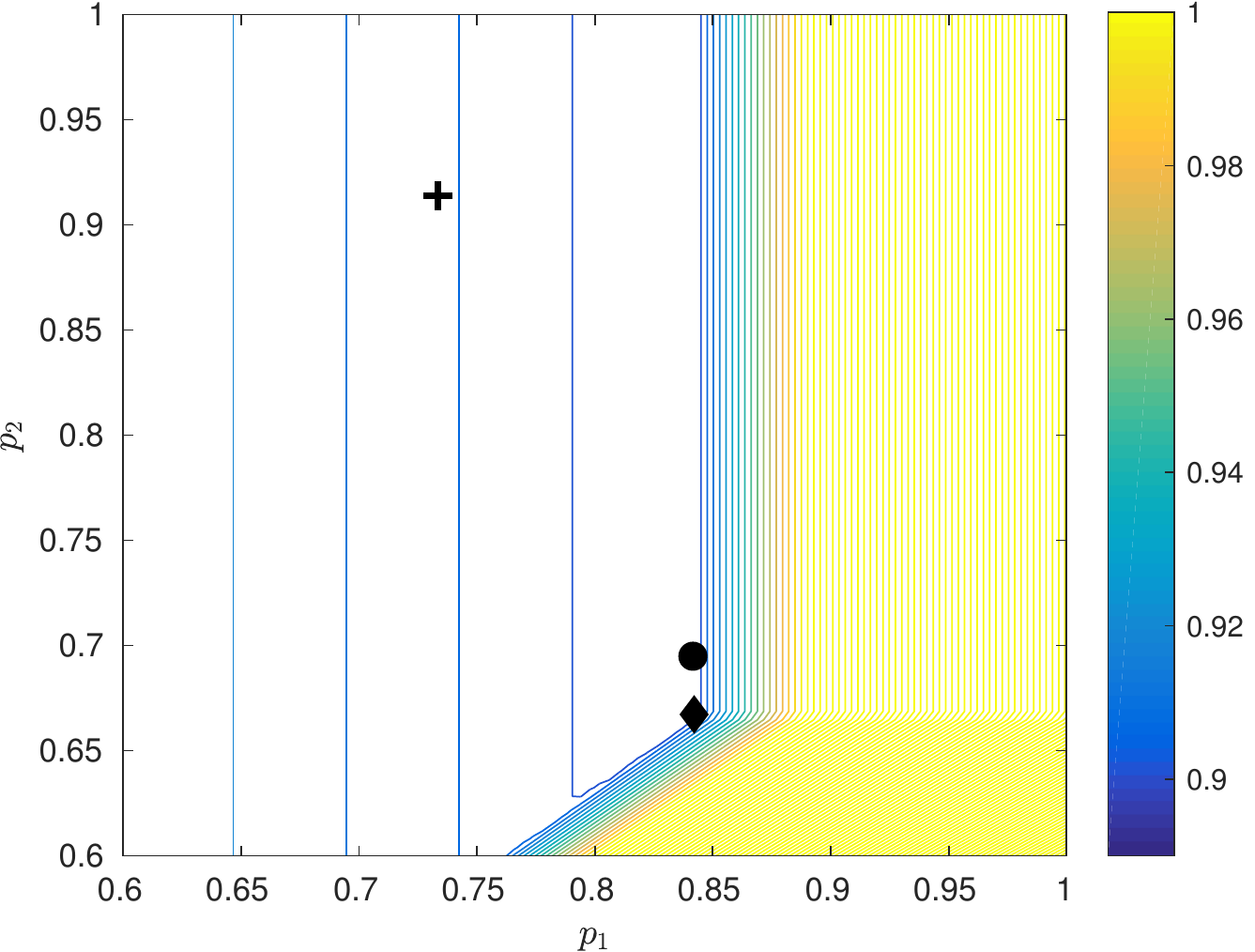}
   \captionof{figure}{$\Psi_{\thetab(\Ntheta^3=33^3)}(p_1,p_2)$ as a function of $p_1$ and $p_2$ for 3D control problem.
   Cross denotes solution found by ROBOBOA-D,
   circle denotes solution found by
   ROBOBOA-DF, and
   diamond denotes solution found by HANSO.}\label{GCP-contour-plt}
\end{minipage}%
\end{figure}

To explore the sensitivity of LFA-predicted convergence factors to parameter choice, in \Cref{GCP-contour-plt} we examine
$\Psi_{\thetab(\Ntheta^3=33^3)}(p_1,p_2)$ as a function of $p_1$ and $p_2$.
We see that the optimal parameters match with the analytical solutions where $p_1=\frac{16}{19}$ and $p_2\in[\frac{2}{3},4]$.

\section{Conclusions}\label{Conclusion}
While both analytical calculations and brute-force search have been used for many years to optimize multigrid parameter choices with LFA,
increasingly many problems are encountered where both these methods are computationally infeasible.
Here, we propose variants of recent robust optimization algorithms applied to the LFA minimax problem.
Numerical results show that these algorithms are, in general, capable of finding optimal (or near-optimal) parameters in a number of function evaluations orders of magnitude smaller
than brute-force search.
When analytical derivatives are available, \ROBOBOA\ with derivatives is often the most efficient approach, but its performance clearly suffers when using central-difference approximations.
\ROBOBOA\ without derivatives and \HANSO\ coupled with a brute-force inner search are less efficient but frequently successful.
In terms of consistently finding approximately optimal solutions, however, \ROBOBOA\ without derivatives seems to be the preferred method.
Moreover, a strong case can be made for \ROBOBOA\ in that \ROBOBOA\ does not depend heavily on an a priori discretization $\Ntheta^d$ but instead adapts to the most ``active'' Fourier modes (in the sense of those that dominate the multigrid performance prediction)
at a set of parameter values $\pb$. 
Thus, \ROBOBOA\ is a preferable method when not much is known analytically about an LFA problem.

Given the importance of accurate derivatives to the optimization process,
an obvious avenue for future work is to use automated differentiation tools in combination with ROBOBOA.
This poses more of a software engineering challenge than a conceptual one but may be able to leverage recent LFA software projects such as
\cite{RitExtending2017,kahl2018automated}. Similarly, better inner search approaches for \HANSO\ could lead to a better tool.
Important future work also lies in applying these optimization tools to design and improve multigrid algorithms for the complex systems of PDEs
that are of interest in modern computational science and engineering.  \revise{While we have focused primarily on the choice of relaxation parameters in this work, the tools developed could be directly applied to any part of the multigrid algorithm, including determining coefficients in the grid-transfer operators or scalings of stabilization terms in the coarse-grid equations.  The tools could also be applied to other families of preconditioners, such as determining optimized boundary conditions or two-level processes within Schwarz methods.}


\section*{Acknowledgments}

 S.M. and Y.H. thank Professor Michael Overton for discussing the use of HANSO for this optimization problem during his visit to Memorial University in 2018.  We thank the referees for helpful comments that improved the presentation in this paper.

\bibliographystyle{siam}
\bibliography{LFA_Optimization}

\clearpage
\appendix
\section{Simple Brute-Force Discretized Search}

\begin{algorithm}[H]
\caption{Brute-Force Discretized Search}
\label{alg:brute-force}
\begin{algorithmic}[1]
\begin{scriptsize}
\Require{$N_p, \Ntheta$ (number of samples for parameter and frequency in each dimension)}
\Ensure{$\rho_{\rm opt}$ and $\pb_{\rm opt}$ (approximate minimax solution)}
\Statex
\Function{OutLoop}{$N_p,\Ntheta$}
\State {$p_k={\rm Linspace}(a_k, b_k,N_p)_{k=1}^n$ and $\theta_i={\rm Linspace}(-\frac{\pi}{2},\frac{\pi}{2},\Ntheta)_{i=1}^d$}
\State {$\rho_{\rm opt}=1$}

    \For{$k_1 \gets 1$ to $N_p$}
            \For{$\ddots \gets 1$ to $N_p$}
              \For{$k_n \gets 1$ to $N_p$}

                \State {$\pb\leftarrow\left(p_1({k_1}),p_2({k_2}),\cdots,p_n({k_n})\right)$\hfill \% sample $\pb$}
           \State {$\rho_I=0$ \hfill \% store approximation to $\max_{\boldsymbol{t}} \rho\big(\widetilde{E}(\pb,\boldsymbol{t})\big)$}
                 \For{$j_1 \gets 1$ to $\Ntheta$}
                         \For{$\ddots \gets 1$ to $\Ntheta$}
                           \For {$j_d \gets 1$ to $\Ntheta$}
                            \State {$\thetab=\big(\theta_1(j_1),\theta_2(j_2),\cdots,\theta_d(j_d)\big)$\hfill \% sample $\thetab$}
                            \State {Compute $\rho^{*}=\rho\big(\widetilde{E}(\pb,\thetab)\big)$ \hfill \% spectral radius of $\widetilde{E}$ at sampled $(\pb,\thetab)$}
                            \If {$\rho^{*}\geq \rho_I$}
                            \State {$\rho_{I} \gets \rho^{*}$ }
                            \EndIf

                           \EndFor

                         \EndFor
                  \EndFor

\vspace{1mm}
               \If{$\rho_{\rm opt}\geq \rho_I$}
                \State {$\rho_{\rm opt}\Leftarrow\rho_I$ and $\pb_{\rm opt}\Leftarrow\pb$ \hfill \% update minimax approximation}
                \EndIf
                \vspace{1mm}

              \EndFor
            \EndFor
    \EndFor
    \State \Return {$\rho_{\rm opt}$ and $\pb_{\rm opt}$}
\EndFunction
\end{scriptsize}
\end{algorithmic}
\end{algorithm}

\section{Obtained Parameter Values}
\label{sec:params}

Here we report the first three digits for the parameters $\pb$ obtained by each method in the plots provided,
the corresponding value of $\rho_{\Psi_*}(\pb)$, and an approximate measure of stationarity of $\Psi$ at $\pb$, $\sigma(\pb)$.
The measure $\sigma(\pb)$ is intended to approximate the quantity
\begin{equation}
 \left\|\textbf{Proj}\left(0 \, | \, \textbf{co}\left(\displaystyle\cup_{\yb\in\cB(\pb,10^{-3})}\partial_C\Psi(\yb)\right)\right)\right\|,
 \label{eq:sigmap}
\end{equation}
where $\cB(\pb,10^{-3})$ denotes the Euclidean norm ball of radius $10^{-3}$ centered at $\pb$,
$\partial_C \Psi(\yb)$ denotes the (convex-set-valued) Clarke subdifferential of $\Psi$ at $\yb$,
$\textbf{Proj}\left(0 \, | \, \cdot\right)$ denotes the (unique) projection of the zero vector onto a given (convex) set,
and $\textbf{co}$ denotes the (set-valued) convex hull operator.
\eqref{eq:sigmap} is a natural measure of $\epsilon$-stationarity for nonconvex, nonsmooth, but locally Lipschitz functions $\Psi$;
here, we have taken $\epsilon=10^{-3}$.
Computing \eqref{eq:sigmap} is impractical, however, so we replace the set $\cB(\pb,10^{-3})$ in \eqref{eq:sigmap}
with the proper subset $\tilde\cB(\pb,10^{-3}) = \left\{\pb\right\}\cup\left\{\pb \pm 10^{-3}\mathbf{e}_j\right\}_{j=1}^n$,
where $\mathbf{e}_j$ denotes the $j$th elementary basis vector.
Because $\Psi$ is almost everywhere differentiable, we can define almost everywhere the quantity
$$ \sigma(\pb) = \left\|\textbf{Proj}\left(0 \, | \, \textbf{co}\left(\cup_{\yb\in\tilde\cB(\pb,10^{-3})}\nabla\Psi(\yb)\right)\right)\right\|,$$
the computation of which entails the solution of a convex quadratic optimization problem.
In our experiments, no method returned parameter values $\pb$ for which $\nabla\Psi(\pb)$ did not exist.
We note that $\sigma(\pb)$ upper bounds the quantity in \eqref{eq:sigmap}.
Thus, a nearly zero value for $\sigma(\pb)$ implies that $\pb$ is nearly $10^{-3}$-stationary.

\revise{We remark that the figures corresponding to the tables below exhibit
the best-found value of $\rho_{\Psi_*}$ (see \cref{eq:rho_psi_metric}) within a given number of function evaluations. 
However, because none of the algorithms tested actually compute the value of $\rho_{\Psi_*}$ during the optimization,
the final reported values of $\pb$ (and, hence, $\rho_{\Psi_*}(\pb)$) may not be the best-obtained during the optimization. 
This accounts for a few instances of discrepancies between the figures and tables (see, for example,  \Cref{odd-compare-LP1-coarsen3}).}

\begin{table}[p]
\caption{Parameter values and properties \label{tab:params}}
\raggedright \Cref{Odd-compare-LP1}: Laplace in 1D with a single relaxation,  \cref{eq:Q1Lp}
\\
\begin{tabular}{|l||l|l|l|} \hline
\textbf{Method} & $p_1$ & $\mathbf{\rho_{\Psi_*}(\pb)}$ & $\mathbf{\sigma}(\pb)$ \\ \hline \hline
ROBOBOA-D, AG & 0.667 & 0.333 & 1.110e-16 \\
ROBOBOA-DF & 0.667 & 0.333 & 0\\
HANSO-FI, AG, $\Ntheta=3$ & 0.667 & 0.333 & 1.110e-16 \\
HANSO-FI, AG, $\Ntheta=33$ & 0.667 & 0.333 & 1.110e-16 \\
\hline
\end{tabular}
\\
\raggedright \Cref{Odd-compare-LP2-NH4}: Laplace in 1D using the $P_1$ discretization with two sweeps of Jacobi relaxation \cref{eq:E2Laplace}
\\
\begin{tabular}{|l||l|l|l|l|} \hline
\textbf{Method} & $p_1$ & $p_2$ & $\mathbf{\rho_{\Psi_*}(\pb)}$ & $\mathbf{\sigma}(\pb)$ \\ \hline \hline
ROBOBOA-D, AG  & 1.000 & 0.500 & 0.000 & 1.145e-16 \\
ROBOBOA-DF & 1.000 & 0.500 & 0.000 & 2.372e-16 \\
HANSO-FI, AG, $\Ntheta=3$ & 0.667 & 0.667 & 0.111 & 8.135e-15 \\
\hline
\end{tabular}
\\
\raggedright \Cref{odd-compare-LP3}: $P_2$ discretization of the Laplace equation in 2D
\\
\begin{tabular}{|l||l|l|l|l|} \hline
\textbf{Method} & $p_1$ & $\mathbf{\rho_{\Psi_*}(\pb)}$ & $\mathbf{\sigma}(\pb)$ \\ \hline \hline
ROBOBOA-D, FDG-$t$=1e-8 & 0.851 & 0.610 & 1.891 \\
ROBOBOA-DF & 0.835 & 0.589 & 0 \\
HANSO-FI, FDG-$t$=1e-8, $\Ntheta^2=3^2$ & 0.849 & 0.605 & 1.891 \\
\hline
\end{tabular}
\\
\raggedright \Cref{odd-compare-LP1-coarsen3}: $P_1$ discretization of the Laplace equation in 1D with coarsening-by-threes and piecewise constant interpolation
\\
\begin{tabular}{|l||l|l|l|l|} \hline
\textbf{Method} & $p_1$ & $p_2$ & $\mathbf{\rho_{\Psi_*}(\pb)}$ & $\mathbf{\sigma}(\pb)$ \\ \hline \hline
ROBOBOA-D, FDG-$t$=1e-8  & 0.671 & 2.479 & 0.442 & 0.145 \\
ROBOBOA-DF & 0.741 & 2.249 & 0.429 & 0 \\
HANSO-FI, FDG-$t$=1e-8 $\Ntheta=3$ & 0.775 & 2.073 & 0.461 & 0.847 \\
\hline
\end{tabular}
\\
\raggedright \Cref{odd-compare-IBSR-MAC}: MAC scheme discretization with inexact Braess-Sarazin relaxation \cref{Precondtion}
\\
\begin{tabular}{|l||l|l|l|l|l|} \hline
\textbf{Method} & $p_1$ & $p_2$ & $p_3$ & $\mathbf{\rho_{\Psi_*}(\pb)}$ & $\mathbf{\sigma}(\pb)$ \\ \hline \hline
ROBOBOA-D, AG  & 1.208 & 0.967 & 0.799 & 0.601 & 0.021 \\
ROBOBOA-DF & 1.137 & 0.910 & 0.793 & 0.604 & 0.086 \\
HANSO-FI, AG, $\Ntheta^2=3^2$ & 1.250 & 1.000 & 0.800 & 0.600 & 2.074e-13 \\
HANSO-FI, AG, $\Ntheta^2=9^2$ & 1.250 & 1.000 & 0.800 & 0.600 & 1.634e-4 \\
ROBOBOA-D, FDG-$t$=1e-12 & 1.209 & 0.963 & 0.805 & 0.603 & 1.398 \\
HANSO-FI, FDG-$t$=1e-12, $\Ntheta^2=3^2$ & 1.249 & 1.000 & 0.800 & 0.600 & 3.480e-4 \\
HANSO-FI, FDG-$t$=1e-12, $\Ntheta^2=9^2$ & 1.224 & 0.979 & 0.800 & 0.600 & 0.018 \\
\hline
\end{tabular}
\\
\raggedright \Cref{Odd-compare-Uzawa-MAC}: MAC finite-difference discretization using Uzawa relaxation \cref{Precondtion-Uzawa}
\\
\begin{tabular}{|l||l|l|l|l|l|} \hline
\textbf{Method} & $p_1$ & $p_2$ & $p_3$ & $\mathbf{\rho_{\Psi_*}(\pb)}$ & $\mathbf{\sigma}(\pb)$ \\ \hline \hline
ROBOBOA-D, AG  & 1.918 & 1.691 & 0.134 & 0.775 & 5.176e-5 \\
ROBOBOA-DF & 0.914 & 0.618 & 0.479 & 0.775 & 3.500e-4 \\
HANSO-FI, AG, $\Ntheta^2=3^2$ & 1.191 & 0.937 & 0.272 & 0.775 & 6.317e-5 \\
ROBOBOA-D, FDG-$t$=1e-12  & 1.953 & 1.725 & 0.135 & 0.775 & 0.268 \\
HANSO-FI, FDG-$t$=1e-12, $\Ntheta^2=3^2$ & 1.197 & 0.943 & 0.269 & 0.775 & 3.187e-13 \\
\hline
\end{tabular}
\\
\raggedright \Cref{odd-compare-DWJ-FEM}: $Q_1-Q_1$ discretization of the Stokes equations, using DWJ relaxation \cref{DWJ-Precondition-2Sweeps} with a single sweep of relaxation on the transformed pressure equation
\\
\begin{tabular}{|l||l|l|l|l|l|} \hline
\textbf{Method} & $p_1$ & $p_2$ & $p_3$ & $\mathbf{\rho_{\Psi_*}(\pb)}$ & $\mathbf{\sigma}(\pb)$ \\ \hline \hline
ROBOBOA-D, FDG-$t$=1e-12 & 0.574 & 0.760 & 0.666 & 0.813 & 2.196 \\
ROBOBOA-D, FDG-$t$=1e-6  & 2.297 & 1.279 & 1.650 & 0.618 & 8.975e-5 \\
ROBOBOA-DF & 3.080 & 1.553 & 2.003 & 0.618 & 0 \\
HANSO-FI, FDG-$t$=1e-12, $\Ntheta^2=3^2$ & 3.046 & 1.698 & 2.292 & 0.694 & 1.239 \\
HANSO-FI, FDG-$t$=1e-6, $\Ntheta^2=3^2$ & 2.964 & 1.670 & 2.255 & 0.694 & 1.261 \\
\hline
\end{tabular}
\end{table}

\begin{table}[p]
\caption{Additional parameter values and properties \label{tab:params2}}
\raggedright \Cref{odd-compare-DWJ2}: $Q_1-Q_1$ discretization of the Stokes equations, using DWJ relaxation \cref{DWJ-Precondition-2Sweeps} with two sweeps of relaxation on the transformed pressure equation
\\
\begin{tabular}{|l||l|l|l|l|l|} \hline
\textbf{Method} & $p_1$ & $p_2$ & $p_3$ & $\mathbf{\rho_{\Psi_*}(\pb)}$ & $\mathbf{\sigma}(\pb)$ \\ \hline \hline
ROBOBOA-D, FDG-$t$=1e-6  & 1.443 & 1.124 & 1.283 & 0.333 & 1.596e-4 \\
ROBOBOA-DF & 1.248 & 1.280 & 1.110 & 0.333 & 0\\
HANSO-FI, FDG-$t$=1e-6, $\Ntheta^2=3^2$ & 1.321 & 1.357 & 1.174 & 0.406 & 2.129\\
HANSO-FI, FDG-$t$=1e-6, $\Ntheta^2=8^2$ & 1.196 & 1.264 & 1.072 & 0.347 & 0.727 \\
HANSO-FI, FDG-$t$=1e-6, $\Ntheta^2=9^2$ & 1.242 & 1.300 & 1.104 & 0.337 & 1.847 \\
\hline
\end{tabular}
\\
\raggedright \Cref{odd-compare-VK3}: Vanka relaxation with $n=3$ parameters
\\
\begin{tabular}{|l||l|l|l|l|l|} \hline
\textbf{Method} & $p_1$ & $p_2$ & $p_3$ & $\mathbf{\rho_{\Psi_*}(\pb)}$ & $\mathbf{\sigma}(\pb)$ \\ \hline \hline
ROBOBOA-D, FDG-$t$=1e-12  & 0.092 & 0.298 & 0.477 & 0.560 & 2.716  \\
ROBOBOA-D, FDG-$t$=1e-6  & 0.063 & 0.302 & 0.506 & 0.649 & 2.543  \\
ROBOBOA-DF & 0.218 & 0.293 & 0.465 & 0.437 & 0 \\
HANSO-FI, FDG-$t$=1e-12, $\Ntheta^3=3^3$ & 0.200 & 0.241 &  0.456 & 0.465 & 0.856\\
HANSO-FI, FDG-$t$=1e-6, $\Ntheta^3=3^3$ & 0.201 & 0.241 &  0.456 & 0.465 & 0.856 \\
\hline
\end{tabular}
\\
\raggedright \Cref{odd-compare-VK3}: Vanka relaxation with $n=5$ parameters
\\
\begin{tabular}{|l||l|l|l|l|l|l|l|} \hline
\textbf{Method} & $p_1$ & $p_2$ & $p_3$ & $p_4$ & $p_5$ & $\mathbf{\rho_{\Psi_*}(\pb)}$ & $\mathbf{\sigma}(\pb)$ \\ \hline \hline
ROBOBOA-D, FDG-$t$=1e-12  & 0.165 & 0.250 & 0.182 & 0.327 & 0.441 & 0.477 & 0.828  \\
ROBOBOA-D, FDG-$t$=1e-6  & 0.208 & 0.321 & 0.168 & 0.320 & 0.523 & 0.638 & 2.388  \\
ROBOBOA-DF & 0.218 & 0.321 & 0.202 & 0.248 & 0.457 & 0.459 & 0 \\
HANSO-FI, FDG-$t$=1e-12, $\Ntheta^3=3^3$ & 0.197 & 0.297 &  0.297 & 0.243 & 0.450 & 0.457 & 0.848\\
HANSO-FI, FDG-$t$=1e-6, $\Ntheta^3=3^3$ & 0.189 & 0.236 &  0.236 & 0.240 & 0.456 & 0.466 & 0.791\\
\hline
\end{tabular}
%
%
%
%
\\
\raggedright \Cref{odd-compare-control-problem}: 3D control problem \cref{Control-problem}
\\
\begin{tabular}{|l||l|l|l|l|} \hline
\textbf{Method} & $p_1$ & $p_2$ & $\mathbf{\rho_{\Psi_*}(\pb)}$ & $\mathbf{\sigma}(\pb)$ \\ \hline \hline
ROBOBOA-D, FDG-$t$=1e-12  & 0.913 & 0.734 & 0.908 & 0.125   \\
ROBOBOA-DF & 1.527 & 0.842 & 0.895 & 0 \\
HANSO-FI, FDG-$t$=1e-12, $\Ntheta^3=3^3$ & 0.668 & 0.842 &  0.895 & 1.023e-4 \\
\hline
\end{tabular}
\end{table}

\end{document}